\newtheorem{thrm}{Theorem}[section]
\theoremstyle{definition}
\newtheorem{definition}[thrm]{Definition}
\newtheorem{remark}[thrm]{Remark}
\numberwithin{equation}{section}
\email{hzoubeir2014@gmail.com}
\begin{document}
\title[On polyanalytic functions of Gevrey type on the unit disk]{On the
representation and the uniform polynomial approximation of polyanalytic
functions of Gevrey type on the unit disk}
\dedicatory{\emph{This modest work is dedicated to the memories of two great
men }:\emph{\ our beloved master Ahmed Intissar (1951-2017), a brilliant
mathematician (PhD at M.I.T, Cambridge), a distinguished professor, a man
with a golden heart and our brother and indeed friend Mohamed Saber Bensaid
(1965-2019) the man who belongs to the time of jasmine and sincere love, the
comrade who devoted his whole life to the fight for socialism, democracy,
and human rights.}}
\author{Hicham Zoubeir}
\author{{Samir Kabbaj}}
\address{Universit\'{e} Ibn Tofail, D\'{e}partement de Math\'{e}matiques,
Facult\'{e} des sciences, Kenitra, Maroc.}
\date{}

\begin{abstract}
In this paper we define Gevrey polyanalytic classes of order $N$ on the unit
disk $D$ and we obtain for these classes a characteristic expansion into $N-$%
analytic polynomials on suitable neighborhoods of $D$. As an application of
our main theorem, we perform for the Gevrey polyanalytic classes of order $N$
on the unit disk $D$, an analogue to E. M. Dyn'kin's theorem. We also
derive, for these classes, their characteristic degree of the best uniform
approximation on $D$ by $N-$analytic polynomials.
\end{abstract}

\subjclass[2010]{ 30D60, 26E05, 41A10.}
\keywords{Polyanalytic Gevrey class, Degree of polynomial approximation.}
\maketitle

\section{Introduction}

Among the various classes of functions, the class of bianaytic functions,
the class of polyanalytic functions and the Gevrey classes occupy a major
place in mathematical analysis and in mathematical physics. Given a nonempty
open subset $U$ of $%
\mathbb{R}
^{2},$ a function $f:U\rightarrow 
\mathbb{C}
$ is said to be bianalytic if it satisfies, for each $z\in U,$ the condition 
$\left( \frac{\partial }{\partial \overline{\overline{z}}}\right)
^{2}F\left( z\right) =0.$ Bianalytic functions originates from mechanics
where they played a fundamental role in solving the problems of the planar
elasticity theory. Their usefulness in mechanics was illustrated by the
pioneering works of Kolossoff, Muskhelishvili and their followers (\cite%
{KOL1}-\cite{KOL3}, \cite{MUS1}, \cite{MUS2}, \cite{TEO}).\ By the
systematic use of complex variable techniques these authors have greatly
simplified and extended the solutions of the problems of the elasticity
theory. The class of polyanalytic functions of order $N$ $\left( N\in 
\mathbb{N}
^{\ast }\right) $, is a generalisation of the class of analytic functions
and of that of bianalytic functions. The class $H_{N}(U)$ of polyanalytic
functions of order $N$ on $U$ is the set of functions $F:U\rightarrow 
\mathbb{C}
$ of class $C^{N}$ on $U$ such that the following condition holds for each $%
z\in U:\left( \frac{\partial }{\partial \overline{z}}\right) ^{N}F(z)=0$.
The class of polyanalytic functions was studied intensively by the russian
school under the supervision of M. B. Balk (\cite{BAL}). The lines of
current research on polyanalytic functions are various : the problem of the
best uniform approximation by $N-$analytic polynomials (\cite{MAZ1}-\cite%
{MAZ3}, \cite{VER}), the study of wavelets and Gabor frames (\cite{ABR2}-%
\cite{ABR5}, \cite{ABR9}), the time-frequency analysis (\cite{ABR5}, \cite%
{ABR7}, \cite{ABR8}), the sampling and interpolation in function spaces (%
\cite{ABR1}), the study of coherent states in quantum mechanics (\cite{HAI}, 
\cite{MOU1}), \cite{MOU2}), the image and signal processing (\cite{ABR6}, 
\cite{ABR7}), etc. Gevrey classes, which are also, but in a completely
different way, a generalisation of real analytic functions, were first
introduced by Gevrey $\left( \text{\cite{GEV}}\right) $. Indeed they are
intermediate spaces between the space of real-analytic functions and the
space of $C^{\infty }$ smooth functions. Given $s>0,$ the Gevrey class $%
G^{s}\left( U\right) $ is defined as the set of all functions $%
f:U\rightarrow 
\mathbb{C}
,$ of class\ $C^{\infty }$ on \ $U$ such that\ there exist a constant $R>0$
satisfying for every $x\in U$ and $\left( \alpha _{1},\alpha _{2}\right) \in 
\mathbb{N}
^{2}$ the following estimate : $\left \vert \frac{\partial ^{\alpha
_{1}+\alpha _{2}}g}{\partial x_{1}^{\alpha _{1}}\partial x_{2}^{\alpha _{2}}}%
(x)\right \vert \leq R^{\alpha _{1}+\alpha _{2}+1}\left( \alpha _{1}+\alpha
_{2}\right) ^{s\left( \alpha _{1}+\alpha _{2}\right) }$. The Gevrey classes
play an important role in various branches of partial and ordinary
differential equations and especially in the analysis of operators whose
properties cannot be apprehended by the classical analytic framework. \ The
field of applications of Gevrey classes is very wide : the Gevrey class
regularity\emph{\ }of\emph{\ }the equations of mathematical physics (\cite%
{KIM}, \cite{KUK}, \cite{LAR}, \cite{TIT1}, \cite{TIT2}, \cite{XU}),\emph{\ }%
the study of singularities in micro-local analysis (\cite{ROD1}-\cite{ROD3}%
), the Gevrey solvability of differential operators (\cite{CIC}, \cite{ROD2}%
, \cite{ROD3}), the divergent series and singular and the singular
differential equations (\cite{MIY}, \cite{RAM1}), the study of dynamical
systems (\cite{GRAM}, \cite{RAM2}), the evolution partial differential
equations (\cite{FER}, \cite{LEV}, \cite{TEM}), etc. However despite this
great interest devoted to polyanalytic functions and to Gevrey classes there
is still, at our knowledge, no interplay between them. Our main goal in this
paper is then to contribute to bridging this gap. In order to achieve this
task, we consider the intersection of a Gevrey class on the unit disk $D$
and the class of polyanalytic functions of order $N$ on $D$ and then look
for some property which characterizes this class of functions. Indeed we
obtain, in the main result of this paper, the complete description of these
so-called Gevrey polyanalytic classes of order $N$ by specific expansions
into $N-$analytic polynomials on suitable neighborhoods of $D$. We establish
two applications of our main theorem. The first application concerns the
proof for the Gevrey polyanalytic classes of order $N$ on the unit disk $D$
of an analogue to the E. M. Dyn'kin's theorem (\cite{DYN}). Let us recall
that this theorem basically says that a function $f:\mathcal{R}\rightarrow 
\mathbb{C}
$ of class $C^{\infty }$on a region $\mathcal{R}$ of $%
\mathbb{C}
$ belongs to a class $\mathfrak{X}$ of smooth complex valued functions on $%
\mathcal{R}$ if and only if it has an extension $F:$ $%
\mathbb{C}
\rightarrow 
\mathbb{C}
$ of class $C^{1}$ on $%
\mathbb{C}
$ so that $\frac{\partial F}{\partial \overline{z}}$ satisfies a growth
condition of the form $\left \vert \frac{\partial F}{\partial \overline{z}}%
\left( z\right) \right \vert \leq A\mathcal{H}_{\mathfrak{X}}\left( B\varrho
\left( z,\mathcal{R}\right) \right) $, $z\in 
\mathbb{C}
\backslash \mathcal{R},$ where $A,B>0$ are constants, $\varrho \left( z,%
\mathcal{R}\right) $ is the euclidean distance from $z$ to $\mathcal{R}$ and 
$\mathcal{H}_{\mathfrak{X}}:%
\mathbb{R}
^{+\ast }\rightarrow 
\mathbb{R}
$ is a function related to the class $\mathfrak{X}$, depending only on this
class and such that $\underset{t\rightarrow 0,t>0}{\lim }\mathcal{H}_{%
\mathfrak{X}}\left( t\right) =0.$ $\mathcal{H}_{\mathfrak{X}}$ is then
called the weight function of the class $\mathfrak{X}$ while the function $F$
is said to be a pseudoanalytic extension of the function $f$ with respect to
the class $\mathfrak{X}.$ The second application of our main theorem
concerns the construction, for Gevrey polyanalytic classes of order $N$, of
their degree of the best uniform approximation on $D$ by $N-$analytic
polynomials.

The paper is structured as follows. In section $2$, we state some notations
and definitions and prove a fundamental result which is necessary for the
proof of the first application of our main result. In section $3,$ we give
the definition of polyanalytic functions of order $N$ and recall their main
properties. In section $4,$ we recall the definition of Gevrey classes and
the quantitative version of the closure of these classes under the
composition of functions and finally we state the definition of polyanalytic
Gevrey classes on an open subset $U$ of the complex plane$.$ In section $5,$
we state the main result of this paper. The section $6$ is devoted to the
proof of the main result. Section $7$ presents our applications of the main
result of the paper$.$ Finally section $8$ is an appendix which provides the
proofs of some technical estimates which are crucial for the proof of three
results : the proof of the direct part for $N=1$ $\left( \text{proposition }%
8.1.\right) ,$ the proof of the converse part of corollary $1$ (proposition $%
8.2.),$ the proof of the converse part of corollary $2$ (proposition $8.3.).$

\section{Preliminary notes}

\subsection{Basic notations}

Let $h$ a function defined on a nonempty subset $E$ of $%
\mathbb{C}
.$ We denote by $||h||_{\infty ,E}$ the quantity :%
\begin{equation*}
||h||_{\infty ,E}:=\underset{u\in E}{\sup }|h(u)|\in 
\mathbb{R}
^{+}\cup \left \{ +\infty \right \}
\end{equation*}

Let $S$ be a nonempty subset of $%
\mathbb{C}
,$ then we set for each $z\in 
\mathbb{C}
:$%
\begin{equation*}
\varrho \left( z,S\right) :=\underset{u\in S}{\inf }\left \vert z-u\right
\vert
\end{equation*}%
$\varrho \left( z,S\right) $ represents the euclidean distance from $z$ to $%
S $.

For all $x\in 
\mathbb{R}
$ we set :%
\begin{equation*}
\left \lfloor x\right \rfloor :=\max \left( \{p\in 
\mathbb{Z}
:p\leq x\} \right)
\end{equation*}

We set for every $n\in 
\mathbb{N}
:$%
\begin{equation*}
J\left( n\right) :=\left \{ p\in 
\mathbb{N}
:0\leq p\leq n-1\right \}
\end{equation*}%
We set for each $\alpha \in 
\mathbb{N}
^{n}$ and $s\in I\left( n\right) :$%
\begin{equation*}
\alpha !:=\overset{n}{\underset{j=1}{\dprod }}\alpha _{j}!,\text{ }\left
\vert \alpha \right \vert :=\overset{n}{\underset{j=1}{\dsum }}\alpha _{j}
\end{equation*}

Let $\sigma :=\left( \sigma _{1},\sigma _{2}\right) ,$ $\sigma ^{\prime
}:=\left( \sigma _{1}^{\prime },\sigma _{2}^{\prime }\right) \in 
\mathbb{N}
^{2}.$ We set :%
\begin{equation*}
\sigma \preccurlyeq \sigma ^{\prime }\Leftrightarrow (\sigma _{1}\leq \sigma
_{1}^{\prime }\text{ and }\sigma _{2}\leq \sigma _{2}^{\prime })
\end{equation*}%
In this case we set :%
\begin{equation*}
\binom{\sigma ^{\prime }}{\sigma }:=\binom{\sigma _{1}^{\prime }}{\sigma _{1}%
}\binom{\sigma _{2}^{\prime }}{\sigma _{2}}
\end{equation*}

We denote by $d\nu \left( \zeta \right) $ the usual Lebesgue measure on $%
\mathbb{C}
.$

Let $\zeta \in 
\mathbb{C}
$ and $r>0.$ We denote by $\Delta \left( \zeta ,r\right) $ $\left( \text{%
resp. }\overline{\Delta }\left( \zeta ,r\right) \right) $ the usual open $%
\left( \text{resp. closed}\right) $ disk of center $\zeta $ and radius $r.$ $%
\Gamma (\zeta ,r)$ denotes the usual circle of center $\zeta $ and radius $%
r. $ $D:=$ $\Delta \left( 0,1\right) $ (resp. $\overline{D}$ $:=\overline{%
\Delta }\left( 0,1\right) $) is called the open (resp.closed) unit disk of
the complex plane. It is then clear that :%
\begin{equation*}
\varrho \left( z,D\right) =\left \{ 
\begin{array}{c}
0\text{ if }z\in D \\ 
\left \vert z\right \vert -1\text{ else}%
\end{array}%
\right.
\end{equation*}%
For all $r,R>0$ and $n\in 
\mathbb{N}
^{\ast }$ we set :%
\begin{equation*}
D_{R}:=\Delta \left( 0,1+R\right) ,\text{ }\overline{D}_{R}:=\overline{%
\Delta }\left( 0,1+R\right) ,\text{ }D_{k,R,n}:=D_{Rn^{\frac{-1}{k}}}
\end{equation*}

For each $m\in 
\mathbb{N}
^{\ast }\backslash \left \{ 1\right \} $ we denote by $\mathcal{L}_{m}$ the
function defined on the set $\mathcal{U}_{m}:=\left \{ \left(
s_{1},...,s_{m-1}\right) \in 
\mathbb{R}
^{m-1}:1<s_{1}<...<s_{m-1}\right \} $ by the formula :%
\begin{eqnarray*}
&&\mathcal{L}_{m}\left( s_{1},...,s_{m-1}\right) \\
&:&=\underset{j=1}{\overset{m-1}{\dprod }}\left( \frac{s_{j}^{2}+1}{%
s_{j}^{2}-1}\right) \underset{p=1}{\overset{m-1}{\sum }}\left( s_{p}^{m-1}%
\underset{j\neq p}{\dprod }\left( \frac{s_{j}^{2}+1}{\left \vert
s_{j}^{2}-s_{p}^{2}\right \vert }\right) \right)
\end{eqnarray*}%
For each real numbers $r$ and $r_{0}$ such that $r>$ $r_{0}>0$ we set :%
\begin{eqnarray*}
&&\mathcal{J}_{m}\left( r_{0},r\right) \\
&:&=\mathcal{L}_{m}\left( 1+\frac{r-r_{0}}{mr_{0}},1+2\left( \frac{r-r_{0}}{%
mr_{0}}\right) ...,1+\left( m-1\right) \left( \frac{r-r_{0}}{mr_{0}}\right)
\right)
\end{eqnarray*}

\begin{proposition}
The following estimate holds for each $m\in 
\mathbb{N}
^{\ast }$ and $\varepsilon >0:$%
\begin{equation}
\mathcal{J}_{m}\left( 1+\frac{\varepsilon }{2},1+\varepsilon \right) \leq
\left( m-1\right) \left( \frac{5m}{2}\right) ^{2m-2}\left( 1+\frac{2}{%
\varepsilon }\right) ^{2m-2}  \label{ESTM.I}
\end{equation}
\end{proposition}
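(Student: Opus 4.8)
The plan is to substitute $r_0=1+\varepsilon/2$ and $r=1+\varepsilon$ into the definition of $\mathcal{J}_m$, which reduces the claim to bounding $\mathcal{L}_m$ at an explicit arithmetic progression, and then to estimate separately the product factor and the sum factor occurring in $\mathcal{L}_m$ by crude but uniform bounds.

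First I would put $\delta:=\frac{r-r_0}{mr_0}=\frac{\varepsilon}{m(2+\varepsilon)}$, so that $\mathcal{J}_m(1+\varepsilon/2,1+\varepsilon)=\mathcal{L}_m(s_1,\dots,s_{m-1})$ with $s_k=1+k\delta$ (for $m=1$ the statement is trivial, both sides vanishing, so I assume $m\ge 2$). Two elementary observations carry the argument: since $\frac{\varepsilon}{2+\varepsilon}<1$ we get $\delta<\frac1m$, hence $1<s_k<2$ for all $k\le m-1$, so $s_k^2+1<5$ and $s_k^{m-1}<2^{m-1}$; and $\frac{1}{m\delta}=1+\frac{2}{\varepsilon}$, which is precisely the factor appearing on the right-hand side of the claimed estimate.

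Next I would estimate the two factors of $\mathcal{L}_m(s_1,\dots,s_{m-1})$. For the prefactor, $s_j^2-1=j\delta(2+j\delta)>2j\delta$ together with $s_j^2+1<5$ gives $\prod_{j=1}^{m-1}\frac{s_j^2+1}{s_j^2-1}<\frac{5^{m-1}}{2^{m-1}(m-1)!\,\delta^{m-1}}$. For the sum, using $s_p^{m-1}<2^{m-1}$, $s_j^2+1<5$ and $|s_j^2-s_p^2|=|j-p|\delta(s_j+s_p)>2|j-p|\delta$, together with the combinatorial identities $\prod_{j\neq p}|j-p|=(p-1)!\,(m-1-p)!$ and $\sum_{p=1}^{m-1}\frac{1}{(p-1)!\,(m-1-p)!}=\frac{2^{m-2}}{(m-2)!}$, yields $\sum_{p=1}^{m-1}s_p^{m-1}\prod_{j\neq p}\frac{s_j^2+1}{|s_j^2-s_p^2|}<\frac{2^{m-1}5^{m-2}}{(m-2)!\,\delta^{m-2}}$. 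Multiplying the two bounds gives $\mathcal{J}_m(1+\varepsilon/2,1+\varepsilon)<\frac{5^{2m-3}}{(m-1)!\,(m-2)!\,\delta^{2m-3}}$.

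Finally I would compare with the target. Replacing $1+\frac{2}{\varepsilon}$ by $\frac{1}{m\delta}$ shows that the claimed right-hand side equals $\frac{(m-1)5^{2m-2}}{2^{2m-2}\delta^{2m-2}}$, so the desired inequality reduces to $2^{2m-2}\delta\le 5(m-1)(m-1)!\,(m-2)!$; since $\delta<1$ this follows from the elementary estimate $2^{2m-2}\le 5(m-1)(m-1)!\,(m-2)!$ for $m\ge 2$, proved in one line by induction (the ratio of consecutive right-hand sides is $m^2\ge 4$, and the base case $m=2$ reads $4\le 5$). The bulk of the work is the bookkeeping of the powers of $2$, $5$ and the factorials; the one step worth some care is spotting the two factorial/binomial identities that collapse the sum over $p$, so I expect that to be the main thing to get right.
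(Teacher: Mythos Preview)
Your proposal is correct and follows essentially the same route as the paper: substitute $s_j=1+j\delta$ with $\delta=\frac{\varepsilon}{m(2+\varepsilon)}$, use the crude bounds $s_j^2+1<5$, $s_j^2-1>2j\delta$, $|s_j^2-s_p^2|>2|j-p|\delta$, $s_p^{m-1}<2^{m-1}$, and multiply out. The paper compresses the factorial/binomial bookkeeping and the final comparison into a single ``$\le$'' line, whereas you spell these steps out explicitly (and more carefully), but there is no genuine difference in method.
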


\begin{proof}
Indeed we have : 
\begin{eqnarray*}
&&\mathcal{J}_{m}\left( 1+\frac{\varepsilon }{2},1+\varepsilon \right) \\
&=&\mathcal{L}_{m}\left( 1+\frac{1}{m}\left( \frac{\varepsilon }{%
2+\varepsilon }\right) ,1+\frac{2}{m}\left( \frac{\varepsilon }{%
2+\varepsilon }\right) ,...,1+\left( \frac{m-1}{m}\right) \left( \frac{%
\varepsilon }{2+\varepsilon }\right) \right) \\
&=&\underset{j=1}{\overset{m-1}{\dprod }}\left( \frac{\left( 1+\frac{j}{m}%
\left( \frac{\varepsilon }{2+\varepsilon }\right) \right) ^{2}+1}{\left( 1+%
\frac{j}{m}\left( \frac{\varepsilon }{2+\varepsilon }\right) \right) ^{2}-1}%
\right) \underset{p=1}{\overset{m-1}{\sum }}\left( 
\begin{array}{c}
\left( 1+\frac{p}{m}\left( \frac{\varepsilon }{2+\varepsilon }\right)
\right) ^{m-1}\cdot \\ 
\cdot \underset{j\neq p}{\dprod }\left( \frac{\left( 1+\frac{j}{m}\left( 
\frac{\varepsilon }{2+\varepsilon }\right) \right) ^{2}+1}{\left \vert
\left( 1+\frac{j}{m}\left( \frac{\varepsilon }{2+\varepsilon }\right)
\right) ^{2}-\left( 1+\frac{p}{m}\left( \frac{\varepsilon }{2+\varepsilon }%
\right) \right) ^{2}\right \vert }\right)%
\end{array}%
\right) \\
&\leq &\underset{j=1}{\overset{m-1}{\dprod }}\left( \frac{5m\left( \frac{%
2+\varepsilon }{\varepsilon }\right) }{j\left( 2+\frac{j}{m}\left( \frac{%
\varepsilon }{2+\varepsilon }\right) \right) }\right) \underset{p=1}{\overset%
{m-1}{\sum }}\left( 2^{m-1}\underset{j\neq p}{\dprod }\left( \frac{5m\left( 
\frac{2+\varepsilon }{\varepsilon }\right) }{\left \vert j-p\right \vert
\left( 2+\frac{\left( j+p\right) }{m}\left( \frac{\varepsilon }{%
2+\varepsilon }\right) \right) }\right) \right) \\
&\leq &\left( m-1\right) \left( \frac{5m}{2}\right) ^{2m-2}\left( 1+\frac{2}{%
\varepsilon }\right) ^{2m-2}
\end{eqnarray*}

Thence we achieve the proof of the proposition.
\end{proof}

From now on $N\in 
\mathbb{N}
^{\ast }$ and $k>0$ are arbitrary but fixed real numbers.

\subsection{Some function spaces and differential operators}

$C\left( \overline{D}\right) $ represents the set of continuous complex
valued functions on $\overline{D}$ while $C_{0}^{\infty }\left( 
\mathbb{C}
\right) $ denotes the set of complex valued functions defined and of class $%
C^{\infty }$ on $%
\mathbb{C}
$ and of compact support.

We denote by $\frac{\partial }{\partial \overline{z}}$ the well-known
Cauchy-Riemann operator differential operator defined by the formula : 
\begin{equation*}
\frac{\partial }{\partial \overline{z}}:=\frac{1}{2}\left( \frac{\partial }{%
\partial x}+i\frac{\partial }{\partial y}\right)
\end{equation*}%
while $\frac{\partial }{\partial z}$ is the differential operator whose
definition is :%
\begin{equation*}
\frac{\partial }{\partial z}:=\frac{1}{2}\left( \frac{\partial }{\partial x}%
-i\frac{\partial }{\partial y}\right)
\end{equation*}%
For each $\alpha :=\left( \alpha _{1},\alpha _{2}\right) \in 
\mathbb{N}
^{2},$ we denote by $D^{\alpha }$ the differential operator :

\begin{equation*}
D^{\alpha }:=\frac{\partial ^{\left \vert \alpha \right \vert }}{\partial
x^{\alpha _{1}}\partial y^{\alpha _{2}}}
\end{equation*}
while $\frac{\partial ^{\left \vert \alpha \right \vert }}{\partial
z^{\alpha _{1}}\partial \overline{z}^{\alpha _{2}}}$ is the differential
operator defined by :%
\begin{equation*}
\frac{\partial ^{\left \vert \alpha \right \vert }}{\partial z^{\alpha
_{1}}\partial \overline{z}^{\alpha _{2}}}:=\left( \frac{\partial }{\partial z%
}\right) ^{\alpha _{1}}\left( \frac{\partial }{\partial \overline{z}}\right)
^{\alpha _{2}}
\end{equation*}

The following proposition will play a fundamental role in the proof of the
first application of our main result.

\begin{proposition}
\textit{For every }$\varphi \in C_{0}^{\infty }\left( 
\mathbb{C}
\right) $ \textit{and }$N\in 
\mathbb{N}
^{\ast }$ \textit{the following relation holds }:%
\begin{equation*}
\varphi \left( z\right) =\underset{%
\mathbb{C}
}{\diint }\frac{\left( \overline{z}-\overline{\zeta }\right) ^{N-1}}{\pi
\left( N-1\right) !\left( z-\zeta \right) }\left( \frac{\partial }{\partial 
\overline{z}}\right) ^{N}\varphi \left( \zeta \right) d\nu \left( \zeta
\right)
\end{equation*}
\end{proposition}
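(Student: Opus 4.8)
The plan is to establish the formula by a finite descending induction on $N$, stripping off one Cauchy--Riemann derivative at a time until only the classical Cauchy--Pompeiu formula remains. Fix $\varphi\in C_{0}^{\infty}(\mathbb{C})$ and $z\in\mathbb{C}$. Since every derivative of $\varphi$ is again in $C_{0}^{\infty}(\mathbb{C})$, the functions $\psi_{j}:=\left(\frac{\partial}{\partial\overline{\zeta}}\right)^{j}\varphi$ all lie in $C_{0}^{\infty}(\mathbb{C})$ and satisfy $\psi_{j}=\frac{\partial}{\partial\overline{\zeta}}\psi_{j-1}$. Writing $E_{n}(w):=\dfrac{\overline{w}^{\,n-1}}{\pi(n-1)!\,w}$ for $n\in\mathbb{N}^{\ast}$, the right-hand side of the proposition is $\iint_{\mathbb{C}}E_{N}(z-\zeta)\,\psi_{N}(\zeta)\,d\nu(\zeta)$, so the claim reduces to showing that this quantity equals $\varphi(z)$.

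The base case $N=1$ reads $\varphi(z)=\iint_{\mathbb{C}}\dfrac{\psi_{1}(\zeta)}{\pi(z-\zeta)}\,d\nu(\zeta)$, which is precisely the Cauchy--Pompeiu representation of a compactly supported $C^{\infty}$ function; I would derive it by applying Green's (Stokes') theorem to the $1$-form $\frac{\varphi(\zeta)}{\zeta-z}\,d\zeta$ on the region $\{\varepsilon<|\zeta-z|\}\cap\Delta(0,R)$ with $R$ chosen so that $\overline{\Delta}(0,R)\supseteq\operatorname{supp}\varphi$, and then letting $\varepsilon\to0$: the outer circle $\Gamma(0,R)$ contributes nothing because $\varphi$ vanishes there, while the circle $\Gamma(z,\varepsilon)$ contributes $\varphi(z)$ in the limit. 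For the inductive step, assume $N\geq2$. Because $\psi_{N}=\frac{\partial}{\partial\overline{\zeta}}\psi_{N-1}$ and $\psi_{N-1}$ is compactly supported, I integrate by parts after excising the disk $\Delta(z,\varepsilon)$: on $\Gamma(z,\varepsilon)$ one has $|E_{N}(z-\zeta)|=\dfrac{|\zeta-z|^{\,N-2}}{\pi(N-1)!}$, so the boundary term is $O(\varepsilon^{\,N-1})\to0$; away from $\zeta=z$ a direct computation using $\frac{\partial}{\partial\overline{\zeta}}\frac{1}{z-\zeta}=0$ gives $\frac{\partial}{\partial\overline{\zeta}}E_{N}(z-\zeta)=-E_{N-1}(z-\zeta)$, and this classical derivative is locally integrable since $|E_{N-1}(z-\zeta)|=\dfrac{|\zeta-z|^{\,N-3}}{\pi(N-2)!}$ with $N-3>-2$. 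Letting $\varepsilon\to0$ therefore yields
\[
\iint_{\mathbb{C}}E_{N}(z-\zeta)\,\psi_{N}(\zeta)\,d\nu(\zeta)=\iint_{\mathbb{C}}E_{N-1}(z-\zeta)\,\psi_{N-1}(\zeta)\,d\nu(\zeta).
\]

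Iterating this identity from $n=N$ down to $n=1$ reduces the right-hand side of the proposition to $\iint_{\mathbb{C}}E_{1}(z-\zeta)\,\psi_{1}(\zeta)\,d\nu(\zeta)$, which equals $\varphi(z)$ by the base case; this is the assertion. The one point demanding care is the legitimacy of the integration by parts across the singularity of $E_{N}$ at $\zeta=z$, but the bound $|E_{n}(w)|=|w|^{\,n-2}/\big(\pi(n-1)!\big)$ makes both the vanishing of the boundary term and the local integrability of $\frac{\partial}{\partial\overline{\zeta}}E_{n}(z-\zeta)$ automatic for every $n\geq2$, while for $n=1$ that same singularity is exactly what produces the Dirac mass underlying Cauchy--Pompeiu. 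Equivalently, the whole statement amounts to the fact that $E_{N}$ is a fundamental solution of $\left(\frac{\partial}{\partial\overline{z}}\right)^{N}$ --- namely $\frac{\partial}{\partial\overline{z}}E_{n}=E_{n-1}$ for $n\geq2$ and $\frac{\partial}{\partial\overline{z}}E_{1}=\delta$ in $\mathcal{D}'(\mathbb{C})$, so that $\left(\frac{\partial}{\partial\overline{z}}\right)^{N}E_{N}=\delta$ --- combined with $\varphi=E_{N}\ast\big(\left(\frac{\partial}{\partial\overline{z}}\right)^{N}\varphi\big)$, which is valid because $\left(\frac{\partial}{\partial\overline{z}}\right)^{N}\varphi\in C_{0}^{\infty}(\mathbb{C})$ and $E_{N}\in L^{1}_{\mathrm{loc}}(\mathbb{C})$.
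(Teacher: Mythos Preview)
Your argument is correct. The paper's proof is much shorter: it simply cites Vladimirov's textbook for the fact that $\mathcal{V}_{N}(z)=\dfrac{\overline{z}^{\,N-1}}{\pi(N-1)!\,z}$ is a fundamental solution of $\left(\frac{\partial}{\partial\overline{z}}\right)^{N}$, and then writes down the convolution identity $\varphi=\mathcal{V}_{N}\ast\left(\frac{\partial}{\partial\overline{z}}\right)^{N}\varphi$. Your inductive reduction via Cauchy--Pompeiu and integration by parts is exactly a self-contained proof of that fundamental-solution fact (as you yourself remark in your final paragraph), so the two approaches are the same in substance; yours is simply more explicit and does not appeal to an external reference.
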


\begin{proof}
It is well-known ( \cite{VLA}; page 126, exercise 11-4) that the function : 
\begin{equation*}
\mathcal{V}_{N}:\left( x,y\right) \mapsto \frac{\left( x-iy\right) ^{N-1}}{%
\pi .\left( N-1\right) !\left( x+iy\right) }
\end{equation*}%
is a fundamental solution of the differential operator $\left( \frac{%
\partial }{\partial \overline{z}}\right) ^{N}.$ Let us then write $\mathcal{V%
}_{N}\left( x,y\right) $ in the form :%
\begin{equation*}
\mathcal{V}_{N}\left( z\right) :=\frac{\overline{z}^{N-1}}{\pi .\left(
N-1\right) !z}
\end{equation*}%
where $z:=x+iy.$ Consequently for each $\varphi \in C_{0}^{\infty }\left( 
\mathbb{C}
\right) $ and $N\in 
\mathbb{N}
^{\ast }$ the following formula holds for all $z\in 
\mathbb{C}
:$%
\begin{eqnarray*}
\varphi \left( z\right) &=&\underset{%
\mathbb{C}
}{\diint }\mathcal{V}_{N}\left( z-\zeta \right) \left( \frac{\partial }{%
\partial \overline{z}}\right) ^{N}\varphi \left( \zeta \right) d\nu \left(
\zeta \right) \\
&=&\underset{%
\mathbb{C}
}{\diint }\frac{\left( \overline{z}-\overline{\zeta }\right) ^{N-1}}{\pi
\left( N-1\right) !\left( z-\zeta \right) }\left( \frac{\partial }{\partial 
\overline{z}}\right) ^{N}\varphi \left( \zeta \right) d\nu \left( \zeta
\right)
\end{eqnarray*}

The proof of the proposition is then complete.
\end{proof}

\section{Polyanalytic functions of order $N:$ definition and main properties}

Let $U$ be a nonempty open subset of $%
\mathbb{C}
.$ The set $H_{N}(U)$ of polyanalytic functions of order $N$ on $U$ is the
set of functions $F:U\rightarrow 
\mathbb{C}
$ of class $C^{N}$ on $U$ such that :%
\begin{equation*}
\left( \forall z\in U\right) :\left( \frac{\partial }{\partial \overline{z}}%
\right) ^{N}F(z)=0
\end{equation*}%
Then $H_{1}(U)$ is the set of holomorphic functions on $U,$ while $H_{2}(U)$
is the set of bianalytic functions on $U.$ It is well known (\cite{BAL},
pages 10 and 11) that a function $F:U\rightarrow 
\mathbb{C}
$ is polyanalytic of order $N$ if and only if it is of the form :

\begin{equation}
\left( \forall z\in U\right) :F(z)=\underset{p=0}{\overset{N-1}{\sum }}%
F_{p}(z)\overline{z}^{p}  \label{representation}
\end{equation}%
where $F_{0},$ $...,F_{N-1}$ are holomorphic on $U.$ We can prove by an easy
induction on $N$ that the representation (\ref{representation}) of $F$ is
unique. For every $p\in J\left( N\right) ,$ the function $F_{p}$ is called
the holomorphic component of order $p$ of $F$ and labelled by the notation $%
F_{p}:=\mathcal{K}_{p}\left( F\right) .$ It follows also from the formula (%
\ref{representation}) that every function $f\in H_{N}\left( U\right) $ is of
class $C^{\infty }$ on $U.$ We denote by $\Pi _{N}$ the vector space of
complex polynomial functions $P$ of the form : 
\begin{equation*}
z\in 
\mathbb{C}
\mapsto P(z):=\underset{p=0}{\overset{N-1}{\sum }}Q_{p}(z)\overline{z}^{p},
\end{equation*}%
where $Q_{0},,...,Q_{N-1}$ are holomorphic polynomials. The members of $\Pi
_{N}$ are called $N$-analytic polynomials. The degree of $P\neq 0$ is then
the integer :%
\begin{equation*}
d%
{{}^\circ}%
\left( P\right) :=\underset{0\leq p\leq N-1,\text{ }Q_{p}\neq 0}{\max }\deg
\left( Q_{p}\right)
\end{equation*}%
where $\deg \left( Q_{p}\right) $ denotes the usual degree of $Q_{p}\in 
\mathbb{C}
\left[ X\right] .$ We will set $d%
{{}^\circ}%
\left( 0\right) =-\infty .$ With the convention that%
\begin{equation*}
\left( \forall n\in 
\mathbb{N}
\right) :-\infty \leq n
\end{equation*}%
$\Pi _{N,n}$ is, for each $n\in 
\mathbb{N}
,$ the vector subspace of$\  \Pi _{N}$ defined by :%
\begin{equation*}
\Pi _{N,n}:=\left \{ P\in \Pi _{N}:d%
{{}^\circ}%
\left( P\right) \leq n\right \}
\end{equation*}%
For each continuous function $f:\overline{D}\rightarrow 
\mathbb{C}
,$ the $N-$approximating number of order $n$ is :%
\begin{equation*}
\mathcal{E}_{N,n}\left( f\right) :=\underset{P\in \Pi _{N,n}}{\inf }\left
\Vert f-P\right \Vert _{\infty ,\overline{D}}
\end{equation*}%
Let $\mathfrak{X}$ be a nonempty subset of $C\left( \overline{D}\right) $. A
degree of the best uniform $N-$polynomial approximation of functions of $%
\mathfrak{X}$ is a set $\mathcal{R}:\mathcal{=}\left \{ \mathfrak{m}_{\mu
}:\mu \in \Lambda \right \} $ of functions $\mathfrak{m}_{\mu }:%
\mathbb{R}
^{+}\rightarrow 
\mathbb{R}
^{+}$ ($\Lambda $ a nonempty set) such that :%
\begin{equation*}
\left( \forall f\in C\left( \overline{D}\right) \right) :\left[ \left( f\in 
\mathfrak{X}\right) \Leftrightarrow \left( \left( \exists \mu \in \Lambda
\right) \left( \forall n\in 
\mathbb{N}
\right) :\mathcal{E}_{N,n}\left( f\right) \leq \mathfrak{m}_{\mu }\left(
n\right) \right) \right]
\end{equation*}

The following results, whose proofs can be found in (\cite{BAL}, pages 21-25
and 27), will play a fundamental role in the current paper.

\begin{theorem}
\textit{Let }$\gamma _{p}:=\Gamma (\zeta ,r_{p})$ \textit{be }$N$ \textit{%
circles where }$0<r_{0}<r_{1}<...<r_{N-1}<r$ and $f$ $\in H_{N}\left( \Delta
(\zeta ,r)\right) .$\textit{\ }
\end{theorem}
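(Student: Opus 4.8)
Since the statement is quoted from Balk and is the quantitative form of the fact that a polyanalytic function of order $N$ is determined on a disk by its boundary values on $N$ concentric circles, I take the intended conclusion to be a reconstruction of the holomorphic components $\mathcal{K}_0(f),\dots ,\mathcal{K}_{N-1}(f)$ from the restrictions $f|_{\gamma _{0}},\dots ,f|_{\gamma _{N-1}}$ (via explicit contour integrals over the $\gamma _{j}$), together with estimates of the shape $\Vert \mathcal{K}_{p}(f)\Vert _{\infty ,\overline{\Delta }(\zeta ,r_{0})}\leq r_{0}^{-p}\,\mathcal{L}_{N}(r_{1}/r_{0},\dots ,r_{N-1}/r_{0})\max _{j}\Vert f\Vert _{\infty ,\gamma _{j}}$. \emph{Step 1 (turn the polyanalytic structure on each circle into holomorphic data).} Using the Balk representation $f(z)=\sum _{p=0}^{N-1}\mathcal{K}_{p}(f)(z)\,\overline{z}^{\,p}$ and the identity $\overline{z-\zeta }=r_{j}^{2}/(z-\zeta )$ valid on $\gamma _{j}=\Gamma (\zeta ,r_{j})$, I would rewrite $(z-\zeta )^{N-1}f(z)$ on $\gamma _{j}$ as $\sum _{m=0}^{N-1}r_{j}^{2m}\,(z-\zeta )^{N-1-m}G_{m}(z)$, where $G_{m}(z):=\sum _{p=m}^{N-1}\binom{p}{m}\overline{\zeta }^{\,p-m}\mathcal{K}_{p}(f)(z)$ is holomorphic (for $\zeta =0$ one simply has $G_{m}=\mathcal{K}_{m}(f)$). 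The point is that this right-hand side is a function $H_{j}$ holomorphic on all of $\Delta (\zeta ,r)$ which coincides with $(z-\zeta )^{N-1}f(z)$ on $\gamma _{j}$; hence $H_{j}$ is recovered inside $\gamma _{j}$ by Cauchy's formula, and by the maximum principle $|H_{j}|\leq r_{j}^{\,N-1}\Vert f\Vert _{\infty ,\gamma _{j}}$ on $\overline{\Delta }(\zeta ,r_{j})$.

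\emph{Step 2 (invert the Vandermonde system and divide out the forced zero).} The $N$ identities $H_{j}=\sum _{m}r_{j}^{2m}(z-\zeta )^{N-1-m}G_{m}$, $j\in J(N)$, form a linear system whose matrix is the Vandermonde matrix of the distinct nodes $x_{j}:=r_{j}^{2}$, hence invertible, so $(z-\zeta )^{N-1-m}G_{m}(z)=\sum _{j}c_{m,j}H_{j}(z)$, where $c_{m,j}$ is the coefficient of $t^{m}$ in the Lagrange polynomial $\prod _{l\neq j}\frac{t-x_{l}}{x_{j}-x_{l}}$. The left-hand side has a zero of order at least $N-1-m$ at $z=\zeta $, so $G_{m}(z)=(z-\zeta )^{-(N-1-m)}\sum _{j}c_{m,j}H_{j}(z)$ is holomorphic; applying the maximum principle on $\overline{\Delta }(\zeta ,r_{0})$ (where $|z-\zeta |\leq r_{0}\leq r_{j}$, so $|H_{j}|\leq r_{j}^{N-1}\Vert f\Vert _{\infty ,\gamma _{j}}$) gives $\Vert G_{m}\Vert _{\infty ,\overline{\Delta }(\zeta ,r_{0})}\leq r_{0}^{-(N-1-m)}\sum _{j}|c_{m,j}|\,r_{j}^{N-1}\Vert f\Vert _{\infty ,\gamma _{j}}$. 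Inverting the (unitriangular) system expressing the $G_{m}$ in terms of the $\mathcal{K}_{p}(f)$ then yields the corresponding bound for each $\mathcal{K}_{p}(f)$; for $\zeta =0$ there is nothing to invert and one obtains directly $\Vert \mathcal{K}_{p}(f)\Vert _{\infty ,\overline{\Delta }(0,r_{0})}\leq r_{0}^{-(N-1-p)}\sum _{j}|c_{p,j}|r_{j}^{N-1}\Vert f\Vert _{\infty ,\gamma _{j}}$, and the same chain of steps, keeping the Cauchy representations of the $H_{j}$, produces the explicit reconstruction formula for $\mathcal{K}_{p}(f)$.

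\emph{Step 3 (identify the constant with $\mathcal{L}_{N}$).} Writing $r_{j}=r_{0}s_{j}$ (so $s_{0}=1<s_{1}<\dots <s_{N-1}$) and using $|c_{m,j}|=\sigma _{N-1-m}\!\big(\{s_{l}^{2}:l\neq j\}\big)\big/\prod _{l\neq j}|s_{j}^{2}-s_{l}^{2}|$, where $\sigma _{k}$ is the $k$-th elementary symmetric function, the bound of Step 2 becomes $r_{0}^{-m}\max _{j}\Vert f\Vert _{\infty ,\gamma _{j}}$ times $\sum _{j}\frac{\sigma _{N-1-m}(\{s_{l}^{2}:l\neq j\})\,s_{j}^{N-1}}{\prod _{l\neq j}|s_{j}^{2}-s_{l}^{2}|}$. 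Here the node $j=0$ (where $s_{0}=1$) contributes the prefactor $\prod _{l=1}^{N-1}\frac{s_{l}^{2}+1}{s_{l}^{2}-1}$, while the remaining nodes, after the algebraic simplifications of the type $\frac{r_{j}+r_{0}}{r_{j}^{2}-r_{0}^{2}}=\frac{1}{r_{j}-r_{0}}$ that produce the cancellations, are absorbed into the factor $\sum _{p=1}^{N-1}s_{p}^{N-1}\prod _{l\neq p}\frac{s_{l}^{2}+1}{|s_{l}^{2}-s_{p}^{2}|}$; together these are exactly $\mathcal{L}_{N}(s_{1},\dots ,s_{N-1})$. Finally, summing the component estimates with the trivial bound $|\overline{z}^{\,p}|\leq r_{0}^{p}$ on $\overline{\Delta }(\zeta ,r_{0})$ gives the corresponding estimate for $f$ itself.

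I expect \emph{Step 3} to be the main obstacle: the Vandermonde inversion and the maximum-principle estimates are routine, but matching the raw per-circle estimate to the precise closed form $\mathcal{L}_{N}$ requires keeping the Lagrange coefficients $c_{m,j}$ exact (the crude bound $\sigma _{N-1-m}\leq \prod _{l\neq j}(1+s_{l}^{2})$ is already too lossy) and carrying out the cancellations carefully, together with handling the unitriangular change of variables from the $G_{m}$ to the $\mathcal{K}_{p}(f)$ when $\zeta \neq 0$.
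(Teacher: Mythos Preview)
The paper does not give its own proof of this theorem: it is quoted from Balk (\cite{BAL}, pp.\ 21--25 and 27) and used as a black box, so there is no in-paper argument to compare against. That said, your Steps~1--2 are exactly the standard mechanism behind Balk's estimate: on each circle $\gamma_j$ the substitution $\overline{z-\zeta}=r_j^{2}/(z-\zeta)$ turns $(z-\zeta)^{N-1}f$ into a genuinely holomorphic function $H_j$, the resulting $N\times N$ system in the $G_m$ has Vandermonde matrix in the nodes $r_j^{2}$, and the maximum principle plus the forced zero at $\zeta$ give bounds for each $G_m$ on $\overline{\Delta}(\zeta,r_0)$. This part is correct and is precisely how one proves (\ref{maxp0})--(\ref{maxp2}).

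The gap is Step~3. Your identification of the resulting constant with $\mathcal{L}_N$ is not right, and the mismatch is already visible for $N=2$ with $\zeta=0$. Your bound on $|f|$ on $\overline{\Delta}(0,r_0)$ comes out to $(r_0+r_1)/(r_1-r_0)=(s_1+1)/(s_1-1)$, whereas $\mathcal{L}_2(s_1)=s_1(s_1^{2}+1)/(s_1^{2}-1)$; these are different functions of $s_1$ and neither dominates the other (e.g.\ yours is smaller at $s_1=2$, larger at $s_1=1.5$). The heuristic in Step~3 that ``the node $j=0$ contributes the prefactor $\prod(s_l^{2}+1)/(s_l^{2}-1)$ while the remaining nodes give the sum'' cannot be correct as written, since $\mathcal{L}_N$ is the \emph{product} of those two pieces, not a sum of per-node contributions; the elementary symmetric functions $\sigma_{N-1-m}$ coming out of the Lagrange coefficients do not simplify to $\prod(1+s_l^{2})$ termwise. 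So your argument proves an inequality of the same shape as (\ref{maxp0}) with \emph{some} explicit rational constant in the $s_j$, but not with the specific $\mathcal{L}_N$ recorded by Balk. For the uses made in the present paper this is harmless --- all that is needed downstream is a bound growing like a fixed power of $(s_j-1)^{-1}$, which your constant also satisfies --- but if you want the stated constant $\mathcal{L}_N$ you must either follow Balk's bookkeeping more closely or simply verify that your constant is $\le \mathcal{L}_N$ (which, as the $N=2$ case shows, is false in general, so the first option is the only one).
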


1.\emph{\ Maximum modulus principle for polyanalytic functions of order }$N$%
\emph{\ }$\emph{on}$\emph{\ }$\Delta (\zeta ,r)$\emph{\ }$:$\textit{\ }

a) \textit{The following estimates hold}%
\begin{equation}
\left \Vert f\right \Vert _{\infty ,\overline{\Delta }(\zeta ,r_{0})}\leq 
\mathcal{L}_{N}\left( \frac{r_{1}}{r_{0}},...,\frac{r_{N-1}}{r_{0}}\right) 
\underset{0\leq p\leq N-1}{\max }\left \Vert f\right \Vert _{\infty ,\gamma
_{p}}  \label{maxp0}
\end{equation}

b) \textit{If we assume that :}%
\begin{equation*}
\left \Vert f\right \Vert _{\infty ,\Delta (\zeta ,r)\diagdown \overline{%
\Delta }(\zeta ,r_{0})}<+\infty
\end{equation*}%
\textit{\ then the following estimates hold }:%
\begin{equation}
\left \Vert f\right \Vert _{\infty ,\overline{\Delta }(\zeta ,r_{0})}\leq 
\mathcal{J}_{N}\left( r_{0},r\right) \left \Vert f\right \Vert _{\infty
,\Delta (\zeta ,r)\diagdown \overline{\Delta }(\zeta ,r_{0})}  \label{maxp1}
\end{equation}%
\begin{equation}
\left( \forall p\in J\left( N\right) \right) :\left \Vert \mathcal{K}%
_{p}\left( f\right) \right \Vert _{\infty ,\overline{\Delta }(\zeta
,r_{0})}\leq \frac{\mathcal{J}_{N}\left( r_{0},r\right) \left \Vert f\right
\Vert _{\infty ,\Delta (\zeta ,r)\diagdown \overline{\Delta }(\zeta ,r_{0})}%
}{r_{0}^{p}}  \label{maxp2}
\end{equation}

2. \emph{Weierstrass theorem for polyanalytic functions of order }$\emph{on}$%
\emph{\ }$\Delta (\zeta ,r)$\emph{\ }$:$

\textit{Let }$\left( f_{n}\right) _{n\geq 1}$ \textit{be} \textit{a sequence}
\textit{of polyanalytic}\emph{\ }\textit{functions of order}\emph{\ }$N$%
\emph{\ on }$\Delta \left( \zeta ,r\right) $\textit{\ which is uniformly
convergent on every compact subset of }$\Delta \left( \zeta ,r\right) $ 
\textit{to a function} $f$ . \textit{Then }$f\in H_{N}\left( \Delta \left(
\zeta ,r\right) \right) $ \textit{and for each }$\left( p,q\right) \in 
\mathbb{N}
^{2}$\textit{\ the sequence }$\left( \frac{\partial ^{p+q}f_{n}}{\partial
z^{p}\partial \overline{z}^{q}}\right) _{n\geq 1}$\textit{\ is uniformly
convergent on every compact subset of }$\Delta \left( \zeta ,r\right) $ 
\textit{to the function} $\frac{\partial ^{p+q}f}{\partial z^{p}\partial 
\overline{z}^{q}}$.

By means of the theorem $3.1.$ we prove easily the following result.

\begin{proposition}
\textit{Let }$\left( f_{n}\right) _{n\geq 1}$ \textit{be} \textit{a sequence}
\textit{of }poly\textit{analytic functions of order }$N$\textit{\ on an open
disk }$\Delta \left( \zeta ,r\right) .$ \textit{Let us assume that the
sequence }$\left( f_{n}\right) _{n\geq 1}$ \textit{is uniformly convergent
on every compact subset of }$\Delta \left( \zeta ,r\right) $ \textit{to the
function} $f$ $.$ \textit{Then }$f\in H_{N}\left( \Delta \left( \zeta
,r\right) \right) $ \textit{and, for every }$p\in J\left( N-1\right) ,$%
\textit{\ the sequence of functions }$\left( \mathcal{K}_{p}\left(
f_{n}\right) \right) _{n\geq 1}$ \textit{is uniformly convergent on every
compact subset of }$\Delta \left( \zeta ,r\right) $ \textit{to the function }%
$\mathcal{K}_{p}\left( f\right) .$
\end{proposition}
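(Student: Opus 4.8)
The plan is to leverage Theorem 3.1 directly. First I would fix a compact subset $K$ of $\Delta(\zeta,r)$; by a standard exhaustion argument it suffices to treat $K=\overline{\Delta}(\zeta,r_0)$ for an arbitrary $r_0$ with $0<r_0<r$. Since $(f_n)_{n\geq 1}$ converges uniformly on compact subsets of $\Delta(\zeta,r)$, it is in particular uniformly Cauchy on each such compact set, and the part 2 (Weierstrass theorem) of Theorem 3.1 already gives that the limit $f$ belongs to $H_N(\Delta(\zeta,r))$. So the only new content is the convergence of the holomorphic components $\mathcal{K}_p(f_n)\to\mathcal{K}_p(f)$.

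For that, the idea is to apply estimate (\ref{maxp2}) to the polyanalytic function $f_n-f_m$ (which lies in $H_N(\Delta(\zeta,r))$). Choosing any radii $r_0<r_1<\cdots<r_{N-1}<r$, from (\ref{maxp2}) — or more directly from (\ref{maxp0}) combined with the uniqueness of the representation, together with the explicit extraction of the components — one gets
\begin{equation*}
\left\Vert \mathcal{K}_p(f_n)-\mathcal{K}_p(f_m)\right\Vert_{\infty,\overline{\Delta}(\zeta,r_0)}\leq \frac{\mathcal{J}_N(r_0,r)}{r_0^{p}}\left\Vert f_n-f_m\right\Vert_{\infty,\Delta(\zeta,r)\diagdown\overline{\Delta}(\zeta,r_0)}
\end{equation*}
provided the right-hand norm is finite; one makes it finite by shrinking the outer radius, i.e. working inside $\Delta(\zeta,r')$ for $r_0<r'<r$ so that $\Delta(\zeta,r')\diagdown\overline{\Delta}(\zeta,r_0)$ has compact closure in $\Delta(\zeta,r)$, where $(f_n)$ is uniformly Cauchy. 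Hence $(\mathcal{K}_p(f_n))_{n\geq 1}$ is uniformly Cauchy on $\overline{\Delta}(\zeta,r_0)$, so it converges uniformly there to some holomorphic $g_p$.

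It remains to identify $g_p$ with $\mathcal{K}_p(f)$. This follows by passing to the limit in the representation $f_n(z)=\sum_{p=0}^{N-1}\mathcal{K}_p(f_n)(z)\,\overline{z}^{\,p}$: the right side converges uniformly on $\overline{\Delta}(\zeta,r_0)$ to $\sum_{p=0}^{N-1}g_p(z)\,\overline{z}^{\,p}$, while the left side converges to $f(z)$; thus $f(z)=\sum_{p=0}^{N-1}g_p(z)\,\overline{z}^{\,p}$ on $\overline{\Delta}(\zeta,r_0)$, and since this holds for every $r_0<r$ it holds on all of $\Delta(\zeta,r)$. By the uniqueness of the representation (\ref{representation}) of the polyanalytic function $f$, we conclude $g_p=\mathcal{K}_p(f)$ for each $p\in J(N)$.

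The main obstacle I anticipate is the finiteness issue in applying (\ref{maxp2}): the estimate requires $\left\Vert f_n-f_m\right\Vert_{\infty,\Delta(\zeta,r)\diagdown\overline{\Delta}(\zeta,r_0)}<+\infty$, which need not hold on the full punctured disk since $f_n-f_m$ is only controlled on compact subsets. The remedy — replacing $r$ by an intermediate radius $r'$ — is routine but must be stated carefully so that the annular region stays relatively compact in $\Delta(\zeta,r)$ where uniform Cauchyness is available; the constant $\mathcal{J}_N(r_0,r')$ is then fixed and finite, independent of $n,m$, which is exactly what drives the Cauchy estimate.
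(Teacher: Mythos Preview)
Your argument is correct and matches the paper's approach: the paper does not spell out a proof but simply states that the result follows easily from Theorem~3.1, and your use of estimate~(\ref{maxp2}) applied to $f_n-f_m$ (after shrinking the outer radius to secure finiteness), together with the uniqueness of the representation~(\ref{representation}), is precisely the intended route. The care you take with the intermediate radius $r'$ is the right fix for the finiteness hypothesis in~(\ref{maxp2}).
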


The following result plays a crucial role in the proof \ of the second
application of the main result of this paper.

\begin{proposition}
\emph{Bernstein-Walsh inequality for N-analytic polynomials on the unit disk}
$:$
\end{proposition}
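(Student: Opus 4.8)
The Bernstein–Walsh inequality for $N$-analytic polynomials on the unit disk should say: there is a constant $C = C(N) > 0$ (or perhaps depending only on $N$ through an explicit expression) such that for every $P \in \Pi_{N,n}$ and every $R > 0$,
$$\|P\|_{\infty,\overline{D}_R} \le C(N)\,(1+R)^{n+N-1}\,\|P\|_{\infty,\overline{D}},$$
or something of this shape — the essential point being polynomial-in-$(1+R)$ growth off the unit disk, controlled by $\|P\|_{\infty,\overline D}$, with an exponent governed by the degree $n$ (and the polyanalyticity order $N$).

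**Plan.** First I would reduce the estimate on $\overline D_R$ to an estimate on the circles $\Gamma(0,r_p)$ for suitable radii $r_0 < \cdots < r_{N-1}$ inside $\overline D$, using part 1.a of Theorem 3.1 (the maximum modulus principle, inequality \eqref{maxp0}), but applied in the *reverse* direction: I would dilate. Concretely, for fixed $R$, apply the representation \eqref{representation} to write $P(z) = \sum_{p=0}^{N-1} Q_p(z)\overline z^{\,p}$ with $Q_p = \mathcal K_p(P)$ holomorphic polynomials of degree $\le n$. The triangle inequality gives $\|P\|_{\infty,\overline D_R} \le \sum_{p=0}^{N-1} (1+R)^p \|Q_p\|_{\infty,\overline D_R}$, so the problem splits into (i) the classical Bernstein–Walsh bound $\|Q_p\|_{\infty,\overline D_R} \le (1+R)^{\deg Q_p}\|Q_p\|_{\infty,\overline D} \le (1+R)^n\|Q_p\|_{\infty,\overline D}$ for holomorphic polynomials on the disk (immediate from the maximum principle applied to $Q_p(z)/z^{\deg Q_p}$ or just from $|Q_p(z)| \le \|Q_p\|_{\infty,\overline D}\cdot (1+R)^{\deg Q_p}$ on $\Gamma(0,1+R)$ since each monomial scales that way — actually one must be slightly careful, the clean statement is via the maximum principle), and (ii) bounding $\|Q_p\|_{\infty,\overline D}$ by $\|P\|_{\infty,\overline D}$.

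**The key step.** For part (ii) I would invoke inequality \eqref{maxp2} of Theorem 3.1 with $\zeta = 0$: choosing $r_0 = 1$ and any radii $1 < r_1 < \cdots < r_{N-1} < r$, and noting that $\|P\|_{\infty,\Delta(0,r)\setminus\overline D}$ is finite (being a polynomial), we get
$$\|\mathcal K_p(P)\|_{\infty,\overline D} \le \mathcal J_N(1,r)\,\|P\|_{\infty,\Delta(0,r)\setminus\overline D}.$$
But that introduces $\|P\|$ on an annulus outside $\overline D$, which is not what we want — we want $\|P\|_{\infty,\overline D}$. So instead I would use \eqref{maxp0} directly: with $r_0 = 1$ shrunk slightly, say work on $\Delta(0,1+\epsilon)$ and take circles $\Gamma(0,r_p)$ with $1/2 \le r_0 < r_1 < \cdots < r_{N-1} \le 1$; then \eqref{maxp0} gives $\|P\|_{\infty,\overline\Delta(0,r_0)} \le \mathcal L_N(r_1/r_0,\dots,r_{N-1}/r_0)\max_p \|P\|_{\infty,\Gamma(0,r_p)} \le \mathcal L_N(\cdots)\|P\|_{\infty,\overline D}$. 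This controls $P$ on a smaller disk; to extract the individual components $\mathcal K_p(P)$ one differentiates in $\overline z$ (since $(\partial/\partial\overline z)^p$ applied to \eqref{representation} isolates $Q_p$ up to lower-order terms and a factorial), combined with Cauchy estimates for the holomorphic $Q_j$'s — or, more cleanly, one uses the explicit Vandermonde-type inversion: evaluating $P(re^{i\theta})$ for $N$ fixed values $\theta_1,\dots,\theta_N$ and solving the linear system in $Q_0(re^{i\theta}),\dots$ — which is essentially where the $\mathcal L_N$ function and the $\mathcal J_N$ function come from in Balk's proof. So the honest route is: bound $\|\mathcal K_p(P)\|_{\infty,\overline D}$ by $c(N)\,\|P\|_{\infty,\overline D}$ via \eqref{maxp2} after first controlling $\|P\|$ on a slightly-larger-than-$\overline D$ annulus by $\|P\|_{\infty,\overline D}$ using the holomorphic Bernstein–Walsh on each $Q_p$ — there is a mild bootstrapping/circularity to untangle here, and that is the main obstacle.

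**Resolving the circularity and assembling.** The way to break the loop: prove first, with *no* reference to $\overline D$, that $\|\mathcal K_p(P)\|_{\infty,\overline\Delta(0,1/2)} \le c(N)\,\|P\|_{\infty,\overline D}$ using \eqref{maxp0} with all circles inside $\overline D$ and then \eqref{maxp2}-style component extraction on the disk $\Delta(0,3/4)$ — here the annulus $\Delta(0,3/4)\setminus\overline\Delta(0,1/2)$ is genuinely inside $\overline D$, so no growth outside enters. Then $Q_p = \mathcal K_p(P)$ is a holomorphic polynomial of degree $\le n$, and the standard Bernstein–Walsh inequality for holomorphic polynomials, $\|Q_p\|_{\infty,\overline\Delta(0,\rho)} \le (2\rho)^{n}\|Q_p\|_{\infty,\overline\Delta(0,1/2)}$ for $\rho \ge 1/2$, upgrades the bound on $\overline\Delta(0,1/2)$ to a bound on $\overline\Delta(0,1+R)$ at the cost of a factor $(2(1+R))^{n}$. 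Feeding these back into $\|P\|_{\infty,\overline D_R} \le \sum_{p=0}^{N-1}(1+R)^p\|Q_p\|_{\infty,\overline D_R}$ yields
$$\|P\|_{\infty,\overline D_R} \le N\,c(N)\,2^{n}\,(1+R)^{n+N-1}\,\|P\|_{\infty,\overline D},$$
which is the desired Bernstein–Walsh inequality for $N$-analytic polynomials, with $C(N) = N\,c(N)$ and the explicit constant $c(N)$ expressible through the quantities $\mathcal L_N$ and $\mathcal J_N$ introduced above (and estimated in Proposition 2.1). The routine parts are the holomorphic Bernstein–Walsh estimate and the triangle-inequality bookkeeping; the only subtle point is arranging the two applications of Theorem 3.1 on nested disks *strictly inside* $\overline D$ so that the outside-growth term never appears, which is exactly what makes the constant depend on $N$ alone and not on $n$ or $R$.
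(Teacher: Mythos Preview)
Your eventual approach is exactly the paper's: bound each $\|\mathcal K_j(P)\|_{\infty,\overline\Delta(0,1/2)}$ via inequality (\ref{maxp2}) with $r_0=\tfrac12$, $r=1$, then apply the classical holomorphic Bernstein--Walsh inequality from the disk of radius $\tfrac12$, then sum over $j$. Note that there is no circularity to untangle: since the annulus $D\setminus\overline\Delta(0,\tfrac12)$ already sits inside $\overline D$, the paper applies (\ref{maxp2}) in a single line and obtains $\|\mathcal K_j(P)\|_{\infty,\overline\Delta(0,1/2)}\le 2^{\,j}\mathcal J_N(\tfrac12,1)\|P\|_{\infty,\overline D}$ directly. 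Your detour through a perceived bootstrapping obstacle is unnecessary, though you do arrive at the same place.

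One point worth flagging: your final bound carries an extra factor $2^n$, whereas the paper's stated proposition does not. You are right and the paper is not. The polynomial $P(z)=z^{n-1}(1-z\overline z)\in\Pi_{2,n}$ has $\|P\|_{\infty,\overline D}=\max_{0\le r\le 1}r^{n-1}(1-r^2)\sim 2/(e(n+1))$, yet $|P(2)|=3\cdot 2^{n-1}$, which for large $n$ violates the paper's claimed bound $(2^{N+1}-1)\mathcal J_N(\tfrac12,1)\|P\|_{\infty,\overline D}\,|z|^{n+N-1}$. In the paper's own proof the factor $2^n$ is present when Bernstein--Walsh is invoked on $\overline\Delta(0,\tfrac12)$ but is dropped without comment in the next displayed line; carried through correctly, the paper's argument yields your inequality rather than the one it states.
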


\textit{For each }$P\in \Pi _{N,n}$ and $z\in 
\mathbb{C}
\setminus D,$ \textit{the following inequality holds }:%
\begin{equation*}
\left \vert P\left( z\right) \right \vert \leq \left( 2^{N+1}-1\right) 
\mathcal{J}_{N}\left( \frac{1}{2},1\right) \left \Vert P\right \Vert
_{\infty ,\overline{D}}\left \vert z\right \vert ^{n+N-1}
\end{equation*}

\begin{proof}
We have for each $j\in J\left( N\right) :$%
\begin{equation*}
\left \Vert \mathcal{K}_{j}\left( P\right) \right \Vert _{\infty ,\overline{%
\Delta }_{\frac{1}{2}}}\leq 2^{j}\mathcal{J}_{N}\left( \frac{1}{2},1\right)
\left \Vert P\right \Vert _{\infty ,\overline{D}}
\end{equation*}%
On the other hand, by virtue of the well known Bernstein-Walsh inequality (%
\cite{BAG}), we have for every $z\in $ $%
\mathbb{C}
\setminus D$ and $j\in J\left( N-1\right) :$%
\begin{equation*}
\left \vert \mathcal{K}_{j}\left( P\right) \left( z\right) \right \vert \leq
2^{n}\left \Vert \mathcal{K}_{j}\left( P\right) \right \Vert _{\infty ,%
\overline{\Delta }_{\frac{1}{2}}}\left \vert z\right \vert ^{n}
\end{equation*}%
It follows that :%
\begin{eqnarray*}
\left \vert P\left( z\right) \right \vert &\leq &\underset{j=0}{\overset{N-1}%
{\sum }}\left \vert \mathcal{K}_{j}\left( P\right) \left( z\right) \right
\vert \left \vert z\right \vert ^{j} \\
&\leq &\underset{j=0}{\overset{N-1}{\sum }}2^{j}\mathcal{J}_{N}\left( \frac{1%
}{2},1\right) \left \Vert P\right \Vert _{\infty ,\overline{D}}\left \vert
z\right \vert ^{n+j} \\
&\leq &\left( 2^{N+1}-1\right) \mathcal{J}_{N}\left( \frac{1}{2},1\right)
\left \Vert P\right \Vert _{\infty ,\overline{D}}\left \vert z\right \vert
^{n+N-1}
\end{eqnarray*}

Thence we achieve the proof of the proposition.
\end{proof}

\section{Gevrey classes and Gevrey polyanalytic classes of order $N$}

\begin{definition}
Let $k,s>0$ and $N\in 
\mathbb{N}
^{\ast }$ be given fixed numbers. \textit{Let }$U$ be a nonempty subset of $%
\mathbb{C}
$ and $I$ an interval of $%
\mathbb{R}
.$
\end{definition}

\begin{enumerate}
\item \textit{The\ Gevrey class }$G^{s}\left( U\right) $ \textit{is the set
of functions }$f:U\rightarrow 
\mathbb{C}
$\textit{\ of class }$C^{\infty }$\textit{\ on }$U$\textit{\ such that }$:$%
\begin{equation*}
\left( \forall \alpha \in 
\mathbb{N}
^{2}\right) :\left \Vert D^{\alpha }f\right \Vert _{\infty ,U}\leq
B_{0}^{\left \vert \alpha \right \vert +1}\left \vert \alpha \right \vert
^{s\left \vert \alpha \right \vert }
\end{equation*}%
$B_{0}>0$\textit{\ being a constant, with the convention that }$0^{0}=1.$

\item \textit{The\ Gevrey class }$G^{s}\left( I\right) $ \textit{is the set
of functions }$f:I\rightarrow 
\mathbb{C}
$\textit{\ of class }$C^{\infty }$\textit{\ on }$I$\textit{\ such that }$:$%
\begin{equation*}
\left( \forall n\in 
\mathbb{N}
\right) :\left \Vert f^{\left( n\right) }\right \Vert _{\infty ,I}\leq
B_{1}^{n+1}n^{sn}
\end{equation*}%
$B_{1}>0$\textit{\ being a constant}$.$

\item \textit{The\ Gevrey polyanalytic class of order }$N$ \textit{on }$U$%
\textit{, }$H_{N}^{k}(U),$ \textit{is the set of functions }$f\in H_{N}(U)$%
\textit{\ such that }$:$%
\begin{equation*}
\left( \forall \alpha \in 
\mathbb{N}
^{2}\right) :\left \Vert D^{\alpha }f\right \Vert _{\infty ,U}\leq
B_{2}^{\left \vert \alpha \right \vert +1}\left \vert \alpha \right \vert
^{\left( 1+\frac{1}{k}\right) \left \vert \alpha \right \vert }
\end{equation*}%
$B_{2}>0$\textit{\ being a constant}$.$ \textit{It follows that} :%
\begin{equation*}
H_{N}^{k}(U)=H_{N}(U)\cap G^{1+\frac{1}{k}}\left( U\right)
\end{equation*}
\end{enumerate}

\begin{remark}
We prove easily by direct computations that $H_{N}^{k}(U)$\ is the set of
functions $f\in H_{N}(U)$\ such that $:$%
\begin{equation*}
\left( \forall \left( n,m\right) \in 
\mathbb{N}
^{2}\right) :\left \Vert \frac{\partial ^{n+m}f}{\partial z^{n}\partial 
\overline{z}^{m}}\right \Vert _{\infty ,U}\leq B_{3}^{n+m+1}\left(
n+m\right) ^{\left( 1+\frac{1}{k}\right) \left( n+m\right) }
\end{equation*}%
$B_{3}>0$ being a real constant. \textit{\ }
\end{remark}

\begin{remark}
\textit{We prove, by an easy induction on }$N\geq 1,\ $\textit{that the
following equivalence holds for each }$f\in H_{N}\left( U\right) :$\textit{\ 
}%
\begin{equation*}
\mathit{\ }f\in H_{N}^{k}\mathit{\ }\left( U\right) \Leftrightarrow \left[
\left( \forall p\in J\left( N\right) \right) :\mathcal{K}_{p}\left( f\right)
\in H_{1}^{k}\left( U\right) \right]
\end{equation*}
\end{remark}

A slight refinement of the proof by D. Figueirinhas (\cite{FIG}, theorem $2.$
$5.$ pages $11-13)$ of the closure of Gevrey classes under composition
provide us with the following result which is essential for the proof of the
direct part of our main result.

\begin{theorem}
Let $I$ be an interval of $%
\mathbb{R}
$ and $f\in G^{s}\left( I\right) $, $s>1$. Let $U$ be an open set of $%
\mathbb{C}
$ and $g:U\rightarrow 
\mathbb{C}
$ a function of class $C^{\infty }$ on $J$ such that $f\left( I\right)
\subset U$ and $g\in G^{s}\left( U\right) $. The function $h:=g\circ f\ $\
belongs also to$\ G^{s}\left( I\right) $ and if we assume that $:$%
\begin{eqnarray*}
\left \Vert f^{\left( n\right) }\right \Vert _{\infty ,I} &\leq
&c_{1}d_{1}^{n}n^{sn},\text{ }n\in 
\mathbb{N}
\\
\left \Vert D^{\alpha }g\right \Vert _{\infty ,U} &\leq &c_{2}d_{2}^{\left
\vert \alpha \right \vert }\left \vert \alpha \right \vert ^{s\left \vert
\alpha \right \vert },\text{ }\alpha \in 
\mathbb{N}
^{2}
\end{eqnarray*}%
$\left( c_{1},\text{ }d_{1},\text{ }c_{2},\text{ }d_{2}>0\text{ being
constants}\right) $ then we will have $:$%
\begin{equation*}
\left \Vert h^{\left( n\right) }\right \Vert _{\infty ,I}\leq
c_{3}d_{3}^{n}n^{sn},\text{ }n\in 
\mathbb{N}%
\end{equation*}%
where $:$%
\begin{equation*}
c_{3}=\frac{ec_{1}c_{2}d_{2}}{1+ec_{1}d_{2}},\text{ }d_{3}=d_{1}\left(
1+ec_{1}d_{2}\right)
\end{equation*}%
\textit{\ }
\end{theorem}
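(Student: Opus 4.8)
The plan is to follow the Faà di Bruno route, which expresses derivatives of $h=g\circ f$ as a sum over set partitions, and then to estimate each term using the two hypotheses together with a clean combinatorial bound on the number of partitions. Concretely, I would begin by recording Faà di Bruno's formula in the form
\[
h^{(n)}(x)=\sum_{\pi\in\mathcal P(n)}\bigl(D^{|\pi|}g\bigr)\bigl(f(x)\bigr)\prod_{B\in\pi}f^{(|B|)}(x),
\]
where $\mathcal P(n)$ is the set of partitions of $\{1,\dots,n\}$, $|\pi|$ the number of blocks, and $|B|$ the size of a block. Then I would insert the assumed estimates $\|f^{(m)}\|_{\infty,I}\le c_1 d_1^{m} m^{sm}$ and $\|D^{\alpha}g\|_{\infty,U}\le c_2 d_2^{|\alpha|}|\alpha|^{s|\alpha|}$ (noting $D^{|\pi|}g$ is one of these mixed derivatives of total order $|\pi|$), so that for a partition $\pi$ with $j$ blocks of sizes $m_1,\dots,m_j$ (with $\sum m_i=n$) the corresponding summand is bounded by
\[
c_2 d_2^{j} j^{sj}\cdot c_1^{j} d_1^{n}\prod_{i=1}^{j} m_i^{s m_i}
= c_1^{j} c_2\, d_1^{n} d_2^{j}\, j^{sj}\prod_{i=1}^{j} m_i^{s m_i}.
\]

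The main obstacle — and the place where the claimed explicit constants $c_3=\dfrac{ec_1c_2d_2}{1+ec_1d_2}$, $d_3=d_1(1+ec_1d_2)$ come from — is combining the factor $j^{sj}\prod_i m_i^{sm_i}$ with the count of partitions so as to collapse everything back into the shape $c_3 d_3^{n} n^{sn}$. The key elementary inequality I expect to use is $j^{sj}\prod_{i=1}^{j} m_i^{sm_i}\le n^{sn}$ whenever $\sum_{i=1}^{j} m_i=n$ and $j\le n$; this is exactly where $s>1$ is needed, since for $s>1$ the function $t\mapsto t^{st}$ is "super-multiplicative enough" that distributing the mass $n$ among $j$ groups and paying an extra $j^{sj}$ still does not exceed $n^{sn}$ (one proves it by noting $(\sum m_i)^{s\sum m_i}\ge j^{sj}\prod m_i^{sm_i}$ via $m_i\le n$, $j\le n$, and monotonicity of $t\mapsto t^{s}$). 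Granting this, every summand is at most $c_1^{j} c_2\, d_1^{n} d_2^{j}\, n^{sn}$, and the number of partitions of $\{1,\dots,n\}$ into exactly $j$ blocks is the Stirling number $S(n,j)$, with $\sum_{j=1}^{n} S(n,j) t^{j}\le$ (the $n$-th Bell-type polynomial) admitting the crude bound $S(n,j)\le \binom{n}{j}j^{\,n-j}/\,\text{(something)}$; the cleaner path, which is the one Figueirinhas uses, is to reorganize the sum by block sizes and use the multinomial identity so that the partition count gets absorbed into a single geometric series in $j$.

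Hence I would finish by summing over $j$ from $1$ to $n$: after the reduction the total is bounded by $c_2 d_1^{n} n^{sn}\sum_{j\ge 1} a_j (c_1 d_2)^{j}$ where the coefficients $a_j$ coming from the partition bookkeeping satisfy $a_j\le e^{j}/j!\cdot(\text{integer factor})$, and the series $\sum_{j\ge1}\frac{(ec_1d_2)^{j}}{j!}\cdot(\dots)$ telescopes/resums to give a factor proportional to $\bigl(1+ec_1d_2\bigr)^{n}/(1+ec_1d_2)$ times $ec_1c_2d_2$ — which is precisely $c_3 d_3^{n}$ with the stated constants. The verification that $h\in G^{s}(I)$ at all (rather than with these particular constants) is immediate from the same estimate. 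The only genuinely delicate points are (i) the inequality $j^{sj}\prod m_i^{sm_i}\le n^{sn}$, where the hypothesis $s>1$ enters essentially, and (ii) keeping the constants sharp through the resummation so that the geometric factor $d_1^{n}$ is multiplied by exactly $(1+ec_1d_2)^{n}$ and the prefactor collapses to $c_3$; everything else is routine bookkeeping with Faà di Bruno's formula, and I would refer to \cite{FIG} for the detailed constant-tracking.
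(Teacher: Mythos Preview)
The paper does not give its own proof of this theorem: it merely states the result and attributes it to ``a slight refinement of the proof by D.~Figueirinhas (\cite{FIG}, theorem 2.5, pages 11--13).'' So there is nothing in the paper to compare your argument against beyond that citation, and your Fa\`a di Bruno outline is precisely the standard route that \cite{FIG} follows; in that sense your plan matches the paper's (outsourced) proof.

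Two points in your sketch need correction, however. First, $f:I\to\mathbb{C}$ is complex-valued and $g$ is a function on an open set $U\subset\mathbb{C}\simeq\mathbb{R}^{2}$, so the relevant chain rule is the \emph{multivariate} Fa\`a di Bruno formula: writing $f=u+iv$, each block $B$ of a partition $\pi$ carries a label in $\{u,v\}$, and the outer factor is a genuine mixed derivative $D^{\alpha}g$ with $|\alpha|=|\pi|$, not a one-variable $g^{(|\pi|)}$. Since the hypothesis on $g$ depends only on $|\alpha|$ and both $|u^{(m)}|,|v^{(m)}|\le|f^{(m)}|$, the estimate survives with at most an extra factor $2^{|\pi|}$, so the shape of the conclusion is unaffected; but the displayed formula you wrote is literally the univariate one and should be adjusted. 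Second, your claim that the inequality
\[
j^{sj}\prod_{i=1}^{j}m_i^{sm_i}\le n^{sn}\qquad\Bigl(\sum_{i}m_i=n\Bigr)
\]
``is exactly where $s>1$ is needed'' is not correct: for any $s>0$ this is just the $s$-th power of $j^{j}\prod_i m_i^{m_i}\le n^{n}$, an inequality that does not involve $s$ at all (and which one checks by taking logarithms and using $m_i\le n$ together with $j\le n$). The hypothesis $s>1$ in the statement reflects the paper's intended application with $s=1+\tfrac1k$, not a genuine obstruction in the proof. Your deferral of the final constant-tracking to \cite{FIG} is exactly what the paper itself does.
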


\section{Statement of the main result}

Our main result, in this paper, is the following.

\begin{enumerate}
\item \textit{Let }$F\in H_{N}^{k}(D).$ \textit{Then there exist constants }$%
C>0,$\textit{\ }$R>0$\textit{\ and }$\delta \in ]0,1[$\textit{\ and a
sequence }$(P_{n})_{n\in 
\mathbb{N}
^{\ast }}$\textit{\ of }$N-$\textit{analytic polynomials\ such that }$:$%
\begin{equation*}
\left \{ 
\begin{array}{c}
\left( \forall n\in 
\mathbb{N}
^{\ast }\right) :||P_{n}||_{\infty ,D_{k,R,n}}\leq C\delta ^{n} \\ 
\left( \forall z\in D\right) :\overset{+\infty }{\underset{n=1}{\sum }}%
P_{n}(z)=F(z) \\ 
\left( \forall n\in 
\mathbb{N}
^{\ast }\right) :d%
{{}^\circ}%
\left( P_{n}\right) \leq n^{\frac{k+1}{k}}%
\end{array}%
\right.
\end{equation*}%
\textit{\  \  \ }

\item \textit{Conversly, let }$(f_{n})_{n\in 
\mathbb{N}
^{\ast }}$\textit{be a sequence of }$N-$\textit{analytic polynomials\ such
that }$:$%
\begin{equation*}
\left( \forall n\in 
\mathbb{N}
^{\ast }\right) :||f_{n}||_{\infty ,D_{k,R,n}}\leq C\delta ^{n}
\end{equation*}%
\textit{for some constants }$C>0,$\textit{\ }$R>0$\textit{\ and }$\delta \in
]0,1[.$\textit{\ Then the function series }$\sum f_{n}$\textit{\ converges
uniformly on }$D$\textit{\ to a function }$f\in H_{N}^{k}(D).$
\end{enumerate}

\section{Proof of the main result}

\subsection{Proof of the direct part of the main result}

\subsubsection{Proof of the direct part for N=1}

\begin{lemma}
\bigskip \textit{Let}%
\begin{equation*}
\mathit{\ }%
\begin{array}{cccc}
F: & D & \rightarrow & 
\mathbb{C}
\\ 
& z & \mapsto & \underset{p=0}{\overset{+\infty }{\sum }}a_{p}z^{p}%
\end{array}%
\end{equation*}%
\textit{\ be a function which belongs to }$H_{1}^{k}(D).$\textit{\ Then
there exists constants }$\mathcal{P}_{1},$\textit{\ }$\mathcal{P}_{2}>0$%
\textit{\ such that }$:$%
\begin{equation*}
\left( \forall p\in 
\mathbb{N}
\right) :|a_{p}|\leq \mathcal{P}_{1}\exp \left( -\mathcal{P}_{2}p^{\frac{k}{%
k+1}}\right)
\end{equation*}
\end{lemma}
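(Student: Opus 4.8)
The plan is to read off the decay of the Taylor coefficients $a_{p}$ from the Gevrey growth of the derivatives of $F$, the essential point being to estimate $a_{p}$ \emph{not} through $F^{(p)}$ (that would only give a bound growing with $p$) but through $F^{(n)}$ for a well-chosen order $n$ that grows sublinearly in $p$, and then to optimize over $n$.

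First I would record two elementary bounds. Since $F\in H_{1}^{k}(D)$ is holomorphic, its complex derivative $F^{(n)}$ equals $\left(\frac{\partial}{\partial z}\right)^{n}F$, which (because $\frac{\partial}{\partial z}=\tfrac12(\frac{\partial}{\partial x}-i\frac{\partial}{\partial y})$) is a linear combination of the operators $D^{\alpha}$ with $|\alpha|=n$, the coefficients summing in absolute value to $1$; hence the definition of $H_{1}^{k}(D)$ (equivalently Remark~4.2 with $m=0$) gives, with $s:=1+\frac1k$ so that $1/s=\frac{k}{k+1}$,
\[
\left\Vert F^{(n)}\right\Vert_{\infty,D}\le B_{2}^{\,n+1}n^{sn}\qquad(n\in\mathbb{N}).
\]
Taking $n=0$, and letting $r\to1^{-}$ in $|a_{p}|\le r^{-p}\Vert F\Vert_{\infty,\Gamma(0,r)}$, gives the crude bound $|a_{p}|\le\Vert F\Vert_{\infty,D}\le B_{2}$. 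Next, the Cauchy formula for the Taylor coefficients of $F^{(n)}$ says that its $p$-th coefficient, which is $a_{p+n}\,(p+n)!/p!$, is at most $r^{-p}\Vert F^{(n)}\Vert_{\infty,D}$ for $0<r<1$; letting $r\to1^{-}$ yields, for all integers $0\le n\le q$,
\[
|a_{q}|\le\frac{(q-n)!}{q!}\,B_{2}^{\,n+1}n^{sn}.
\]
When $1\le n\le q/2$, each of the $n$ factors of $q!/(q-n)!=q(q-1)\cdots(q-n+1)$ is $\ge q/2$, so $|a_{q}|\le B_{2}\big(2B_{2}n^{s}/q\big)^{n}$.

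I would then optimize by choosing $n:=\lfloor\lambda q^{1/s}\rfloor$ with $\lambda:=(2B_{2}e)^{-1/s}$. Since $n^{s}\le\lambda^{s}q$ this forces $2B_{2}n^{s}/q\le e^{-1}$, while for every $q$ larger than an explicit $q_{0}$ (depending only on $\lambda$ and $s$) one has $1\le n\le q/2$ and $n\ge\lambda q^{1/s}-1$, whence
\[
|a_{q}|\le B_{2}\,e^{-n}\le B_{2}e\,\exp\!\big(-\lambda q^{k/(k+1)}\big).
\]
For the finitely many $q<q_{0}$ the crude bound $|a_{q}|\le B_{2}$ gives $|a_{q}|\le B_{2}\exp(\lambda q_{0}^{k/(k+1)})\exp(-\lambda q^{k/(k+1)})$, so the lemma follows with $\mathcal{P}_{2}:=\lambda$ and $\mathcal{P}_{1}:=B_{2}\max\big(e,\exp(\lambda q_{0}^{k/(k+1)})\big)$.

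The only genuine idea is the choice described in the first paragraph — estimating $a_{q}$ through a derivative of order $n\asymp q^{k/(k+1)}$ rather than of order $q$; everything after that is routine (a Stirling-type lower bound for the falling factorial and a one-variable optimization of $n\mapsto(2B_{2}n^{s}/q)^{n}$), together with the bookkeeping of constants and of the small-$q$ range. The one technical point worth a word is the passage $r\to1^{-}$ in the Cauchy estimates, which is legitimate precisely because each $F^{(n)}$ is bounded on $D$.
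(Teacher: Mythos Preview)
Your proof is correct. Both your argument and the paper's reduce to the same inequality $|a_{p}|\lesssim C^{n}n^{(1+1/k)n}/p^{n}$, then optimize by choosing $n\asymp p^{k/(k+1)}$; the difference lies in how this inequality is obtained. The paper restricts $F$ to circles, writing $\varphi_{t}(\theta)=F(te^{i\theta})$, and reads off $|a_{p}|p^{n}t^{p}$ as the $p$-th Fourier coefficient of $\varphi_{t}^{(n)}$; to bound $\Vert\varphi_{t}^{(n)}\Vert_{\infty}$ it invokes the quantitative Gevrey-composition result (Theorem~4.4). You instead differentiate the power series directly and apply Cauchy's inequalities to $F^{(n)}$, obtaining the factor $q!/(q-n)!$ rather than $p^{n}$, which you then handle by the elementary falling-factorial bound $q!/(q-n)!\ge(q/2)^{n}$ for $n\le q/2$. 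Your route is more self-contained (it avoids Theorem~4.4 entirely), at the modest cost of this extra estimate and the need to check $n\le q/2$ in the optimization step; the paper's route yields the cleaner multiplier $p^{n}$ immediately but relies on the external composition lemma.
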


\begin{proof}
Let us associate to every $t\in \lbrack 0,1[$ the function : 
\begin{equation*}
\mathit{\ }%
\begin{array}{cccc}
\varphi _{t}: & [0,2\pi ] & \rightarrow & 
\mathbb{C}
\\ 
& \theta & \mapsto & F\left( te^{i\theta }\right)%
\end{array}%
\end{equation*}%
Assume that $F\in H_{1}^{k}(D),$ then there exists $\mathcal{P}_{0}\geq 1$
such that :%
\begin{equation*}
\left( \forall n\in 
\mathbb{N}
\right) :||F^{(n)}||_{\infty ,D}\leq \mathcal{P}_{0}^{n+1}n^{\left( 1+\frac{1%
}{k}\right) n}
\end{equation*}%
Direct computations based on the result of theorem 4. 4. prove that :%
\begin{eqnarray*}
\left( \forall t\in \lbrack 0,1[\right) \left( \forall n\in 
\mathbb{N}
^{\ast }\right) &:& \\
||\varphi _{t}^{(n)}||_{\infty ,[0,2\pi ]} &\leq &\left( 1+e\mathcal{P}%
_{0}\right) ^{n}n^{n(1+\frac{1}{k})}
\end{eqnarray*}%
On the other hand for every $n\in 
\mathbb{N}
^{\ast }$ and $t\in \left] 0,1\right[ ,$ the function $\varphi _{t}^{(n)}$
is a $2\pi -$periodic function whose Fourier coefficient of order $p\in 
\mathbb{N}
^{\ast }$ is :%
\begin{equation*}
i^{n}p^{n}t^{p}a_{p}=\frac{1}{2\pi }\underset{0}{\overset{2\pi }{\int }}%
\varphi _{t}^{(n)}(\theta )e^{-ip\theta }d\theta
\end{equation*}%
It follows that :%
\begin{eqnarray*}
\left( \text{ }\forall \left( p,n\right) \in \left( 
\mathbb{N}
^{\ast }\right) ^{2}\right) \left( \forall \text{ }t\in \left] 0,1\right[
\right) &:& \\
|a_{p}| &\leq &\frac{1}{p^{n}t^{p}}(1+e\mathcal{P}_{0})^{n}n^{n(1+\frac{1}{k}%
)}
\end{eqnarray*}%
Thence we have :%
\begin{equation*}
\left( \forall p\in 
\mathbb{N}
^{\ast }\right) :|a_{p}|\leq 2\mathcal{P}_{0}\underset{n\in 
\mathbb{N}
^{\ast }}{\inf }n^{n(1+\frac{1}{k})}\left( \frac{1+e\mathcal{P}_{0}}{p}%
\right) ^{n}
\end{equation*}%
But straightforward computations show that the following relation holds for
all $p\in $\ $%
\mathbb{N}
^{\ast }$:%
\begin{eqnarray*}
&&\underset{n\in 
\mathbb{N}
^{\ast }}{\inf }n^{n(1+\frac{1}{k})}\left( \frac{1+e\mathcal{P}_{0}}{p}%
\right) ^{n} \\
&=&\min \left( \left( \left( \frac{1+e\mathcal{P}_{0}}{p}\right) ^{\frac{k}{%
k+1}}r_{p}\right) ^{(1+\frac{1}{k})r_{p}},\left( \left( \frac{1+e\mathcal{P}%
_{0}}{p}\right) ^{\frac{k}{k+1}}\left( r_{p}+1\right) \right) ^{(1+\frac{1}{k%
})\left( r_{p}+1\right) }\right)
\end{eqnarray*}%
where $r_{p}:=\left \lfloor e^{-1}\left( 1+e\mathcal{P}_{0}\right) ^{-\frac{k%
}{k+1}}p^{\frac{k}{k+1}}\right \rfloor $. Consequently there exists two
constants \textit{\ }$\mathcal{P}_{1},$\textit{\ }$\mathcal{P}_{2}>0$\textit{%
\ } such that :%
\begin{equation*}
\left( \forall p\in 
\mathbb{N}
^{\ast }\right) :\underset{n\in 
\mathbb{N}
^{\ast }}{\inf }\frac{n^{n(1+\frac{1}{k})}}{\left( \frac{p}{1+e\mathcal{P}%
_{0}}\right) ^{n}}\leq \frac{\mathcal{P}_{1}}{1+e\mathcal{P}_{0}}\exp \left(
-\mathcal{P}_{2}p^{\frac{k}{k+1}}\right)
\end{equation*}%
It follows that : 
\begin{equation*}
\left( \forall p\in 
\mathbb{N}
^{\ast }\right) :|a_{p}|\leq \mathcal{P}_{1}\exp \left( -\mathcal{P}_{2}p^{%
\frac{k}{k+1}}\right)
\end{equation*}

Thence we achieve the proof of the proposition.
\end{proof}

End of the proof of the direct part for $N=1.$

Let us set for every $z\in 
\mathbb{C}
$ and $n\in 
\mathbb{N}
:$%
\begin{equation*}
P_{n}\left( z\right) :=\underset{2^{-\left( \frac{k+1}{k}\right) }n^{\frac{%
k+1}{k}}\leq p<2^{-\left( \frac{k+1}{k}\right) }\left( n+1\right) ^{\frac{k+1%
}{k}}}{\sum }a_{p}z^{p}
\end{equation*}%
Then $(P_{n})_{n\in 
\mathbb{N}
^{\ast }}$\textit{\ }is a sequence of $1-$analytic polynomials such that :%
\begin{equation*}
\left( \forall n\in 
\mathbb{N}
^{\ast }\right) :d^{\circ }\left( P_{n}\right) \leq n^{\frac{k+1}{k}}
\end{equation*}%
Furthermore the following inequality holds for each $n\in 
\mathbb{N}
^{\ast }$ and $z\in D_{k,\frac{\mathcal{P}_{2}}{4},n}:$%
\begin{eqnarray*}
&&\left \vert P_{n}\left( z\right) \right \vert \\
&\leq &\underset{2^{-\left( \frac{k+1}{k}\right) }n^{\frac{k+1}{k}}\leq
p<2^{-\left( \frac{k+1}{k}\right) }\left( n+1\right) ^{\frac{k+1}{k}}}{\sum }%
\left \vert a_{p}\right \vert \left \vert z\right \vert ^{p} \\
&\leq &\underset{2^{-\left( \frac{k+1}{k}\right) }n^{\frac{k+1}{k}}\leq
p<2^{-\left( \frac{k+1}{k}\right) }\left( n+1\right) ^{\frac{k+1}{k}}}{\sum }%
\mathcal{P}_{1}\exp \left( -\mathcal{P}_{2}p^{\frac{k}{k+1}}\right) \cdot \\
&&\cdot \left( 1+\frac{\mathcal{P}_{2}}{4}n^{-\frac{1}{k}}\right) ^{p} \\
&\leq &\mathcal{P}_{1}\exp \left( -\frac{\mathcal{P}_{2}}{2}n\right) \left(
2^{-\left( \frac{k+1}{k}\right) }\left( n+1\right) ^{\frac{k+1}{k}%
}-2^{-\left( \frac{k+1}{k}\right) }n^{\frac{k+1}{k}}+1\right) \cdot \\
&&\cdot \left( 1+\frac{\mathcal{P}_{2}}{4}n^{-\frac{1}{k}}\right) ^{n^{\frac{%
k+1}{k}}} \\
&\leq &2^{\frac{k+1}{k}}\mathcal{P}_{1}\exp \left( -\frac{\mathcal{P}_{2}}{2}%
n\right) \left( \left( n+1\right) ^{\frac{k+1}{k}}-n^{\frac{k+1}{k}%
}+1\right) \left( 1+\frac{\mathcal{P}_{2}}{4}n^{-\frac{1}{k}}\right) ^{n^{%
\frac{k+1}{k}}}
\end{eqnarray*}%
But we will prove in the proposition $8.1.$ in the appendix below, that
there exists a constant $\mathcal{P}_{3}>0$ such that%
\begin{eqnarray*}
&&\left( \forall n\in 
\mathbb{N}
\right) :2^{\frac{k+1}{k}}\mathcal{P}_{1}\exp \left( -\frac{\mathcal{P}_{2}}{%
2}n\right) \left( \left( n+1\right) ^{\frac{k+1}{k}}-n^{\frac{k+1}{k}%
}+1\right) \cdot \\
&&\cdot \left( 1+\frac{\mathcal{P}_{2}}{4}n^{-\frac{1}{k}}\right) ^{n^{\frac{%
k+1}{k}}} \\
&\leq &\mathcal{P}_{3}\left( e^{-\frac{\mathcal{P}_{2}}{8}}\right) ^{n}
\end{eqnarray*}

Consequently the following relation holds :%
\begin{equation*}
\left( \forall n\in 
\mathbb{N}
\right) :\left \Vert P_{n}\right \Vert _{\infty ,D_{k,\frac{\mathcal{P}_{2}}{%
4},n}}\leq \mathcal{P}_{3}\left( e^{-\frac{\mathcal{P}_{2}}{8}}\right) ^{n}
\end{equation*}%
But $e^{-\frac{\mathcal{P}_{2}}{8}}\in \left] 0,1\right[ $ thence we achieve
the proof of the direct part of the main result for $N=1.$

$\square $

\subsubsection{Proof of the direct part in the general case $N\geq 2$}

Let $f\in H_{N}^{k}(D).$ Then there exist, thanks to the remark 3 above, a
functions $g_{0},...,g_{N}$ belonging to $H_{1}^{k}\left( D\right) $ such
that :%
\begin{equation*}
\left( \forall z\in D\right) :f\left( z\right) =\underset{p=0}{\overset{N-1}{%
\sum }}g_{p}\left( z\right) \overline{z}^{p}
\end{equation*}%
Hence there exist for each $p\in J\left( N\right) $ a triple $\left(
C_{p},R_{p},\delta _{p}\right) \in 
\mathbb{R}
^{+\ast }\times 
\mathbb{R}
^{+\ast }\times \left] 0,1\right[ $ of constants and a sequence $\left(
Q_{n,p}\right) _{n\in 
\mathbb{N}
^{\ast }}$ of holomorphic polynomials such that :%
\begin{equation*}
\left \{ 
\begin{array}{c}
\left( \forall n\in 
\mathbb{N}
^{\ast }\right) :d^{\circ }\left( Q_{n,p}\right) \leq n^{\frac{k+1}{k}} \\ 
\left( \forall n\in 
\mathbb{N}
\right) :\left \Vert Q_{n,p}\right \Vert _{\infty ,D_{k,R_{p},n}}\leq
C_{p}\delta _{p}^{n} \\ 
\left( \forall z\in D\right) :\overset{+\infty }{\underset{n=1}{\sum }}%
Q_{n,p}(z)=g_{p}(z)%
\end{array}%
\right.
\end{equation*}%
Let us set $R:=\min \left( R_{0},...,R_{N-1}\right) ,$ $\delta :=\max \left(
\delta _{0},...,\delta _{N-1}\right) ,$ $C:=\max \left(
C_{0},...,C_{N-1}\right) ,$ $Q_{n}\left( z\right) :=\overset{N-1}{\underset{%
p=0}{\sum }}Q_{n,p}\left( z\right) \overline{z}^{p}$. Then $\delta \in \left]
0,1\right[ $ and $Q_{n\text{ }}$ is for each $n\in 
\mathbb{N}
$ a $N-$analytic polynomial such that :%
\begin{equation*}
\left \{ 
\begin{array}{c}
\left( \forall n\in 
\mathbb{N}
^{\ast }\right) :d^{\circ }\left( Q_{n}\right) \leq n^{\frac{k+1}{k}} \\ 
\left( \forall n\in 
\mathbb{N}
\right) :\left \Vert Q_{n}\right \Vert _{\infty ,D_{k,R,n}}\leq NC\left(
1+R\right) ^{N-1}\delta ^{n}%
\end{array}%
\right.
\end{equation*}%
Furthermore we have for each $z\in D:$%
\begin{eqnarray*}
f\left( z\right) &=&\underset{p=0}{\overset{N}{\sum }}g_{p}\left( z\right) 
\overline{z}^{p} \\
&=&\underset{n=0}{\overset{+\infty }{\sum }}Q_{n}\left( z\right)
\end{eqnarray*}%
Thence we achieve the proof of the direct part of the main result.

$\square $

\subsection{Proof of the converse part of the main result}

Let $A>0$ and for each $n\in 
\mathbb{N}
,$ a function $f_{n}\in H_{1}(D_{k,A,n})$ such that :%
\begin{equation*}
\left \{ 
\begin{array}{c}
\left( \forall n\in 
\mathbb{N}
^{\ast }\right) :f_{n}\in H_{N}(D_{k,A,n}) \\ 
\left( \forall n\in 
\mathbb{N}
^{\ast }\right) :||f_{n}||_{\infty ,D_{k,A,n}}\leq C\rho ^{n}%
\end{array}%
\right. \text{ }
\end{equation*}%
where $C>0,$ $\rho \in \left] 0,1\right[ $ are constant. Without loss of the
generality we can also assume that $A<1.$ It follows, by virtue of the
Weierstrass theorem for polyanalytic functions of order $N$, that the
function series $\sum {f_{n}|}_{D}$ converges uniformly to a function $f$ $%
\in H_{N}(D).$ For every $n\in 
\mathbb{N}
^{\ast }$ and $p\in J\left( N\right) ,$ let $a_{p,n}$ be the holomorphic
component of order $p$ of $f_{n}.$ Let $a_{p}$ be the holomorphic component
of order $p$ of $f.$ By virtue of the remark, for each $p\in J\left(
N\right) $ the function series $\sum a_{p,n}$ is uniformly convergent on
every compact of $D$ to the function $a_{p}.$ Then the inequalities (\ref%
{maxp2}) and (\ref{ESTM.I})\ entail that the following estimate holds for\
each\ $\left( p,n\right) \in 
\mathbb{N}
^{2}$ : 
\begin{eqnarray*}
\left \Vert a_{p,n}\right \Vert _{\infty ,D_{k,\frac{A}{2},n}} &\leq &%
\mathcal{J}_{N}\left( 1+\frac{A}{2}n^{-\frac{1}{k}},1+An^{-\frac{1}{k}%
}\right) C\rho ^{n} \\
&\leq &C\left( N-1\right) \left( \frac{5}{2}N\right) ^{2N-2}\left( 1+\frac{2%
}{A}n^{\frac{1}{k}}\right) ^{2N-2}\rho ^{n} \\
&\leq &C\left( N-1\right) \left( \frac{10}{A}N\right) ^{2N-2}n^{\frac{2N-2}{k%
}}\rho ^{n} \\
&\leq &C\left( N-1\right) \left( \frac{10}{A}N\right) ^{2N-2}\left( \underset%
{t\geq 0}{\sup }t^{\frac{2N-2}{k}}\sqrt{\rho }^{t}\right) \sqrt{\rho }^{n}
\end{eqnarray*}%
But direct computations prove that :%
\begin{equation}
\left( \forall l\in \mathbb{N}\right) :\underset{t\geq 0}{\sup }\left( t^{%
\frac{l}{k}}\sqrt{\rho }^{t}\right) =\left( \left( \frac{2}{ek\ln \left( 
\frac{1}{\rho }\right) }\right) ^{\frac{1}{k}}\right) ^{l}l^{\frac{l}{k}}
\label{boundsup}
\end{equation}%
It follows that :%
\begin{equation*}
\left \Vert a_{p,n}\right \Vert _{\infty ,D_{k,\frac{A}{2},n}}\leq \mathcal{Q%
}\sqrt{\rho }^{n}
\end{equation*}%
where :%
\begin{equation*}
\mathcal{Q}:=C\left( N-1\right) \left( \frac{10}{A}N\right) ^{2N-2}\left(
2N-2\right) ^{\frac{2N-2}{k}}\left( \left( \frac{2}{ek\ln \left( \frac{1}{%
\rho }\right) }\right) ^{\frac{1}{k}}\right) ^{2N-2}
\end{equation*}%
Thence, in view of the Cauchy's inequalities for holomorphic functions, we
can write :

\begin{eqnarray*}
\left( \forall n\in 
\mathbb{N}
^{\ast }\right) &:&\left \Vert a_{p,n}^{\left( l\right) }\mathit{\ }\right
\Vert _{\infty ,D} \\
&\leq &\mathcal{Q}l!\left( \frac{2}{A}\right) ^{l}n^{\frac{l}{k}}e^{-\ln
\left( \frac{1}{\sqrt[4]{\rho }}\right) n}\sqrt[4]{\rho }^{n} \\
&\leq &\mathcal{Q}\left( \frac{2}{A}\left( \frac{1}{ek\ln \left( \frac{1}{%
\rho }\right) }\right) ^{\frac{1}{k}}\right) ^{l}l^{\left( 1+\frac{1}{k}%
\right) l}\sqrt[4]{\rho }^{n}
\end{eqnarray*}%
Thence the following estimates hold :%
\begin{equation}
\left( \forall \left( l,n\right) \in 
\mathbb{N}
\times 
\mathbb{N}
^{\ast }\right) :\Vert {a_{p,n}^{(l)}}\Vert _{\infty ,D}\leq \mathcal{P}%
_{4}^{l+1}l^{\left( 1+\frac{1}{k}\right) l}\sqrt[4]{\rho }^{n}  \label{estim}
\end{equation}%
where $\mathcal{P}_{4}:=\max \left( \mathcal{Q},\mathcal{Q}\left( \frac{2}{A}%
\left( \frac{1}{ek\ln \left( \frac{1}{\rho }\right) }\right) ^{\frac{1}{k}%
}\right) ^{l},1\right) .$ Consequently the function series $\sum
a_{p,n}^{(l)}$ is uniformly convergent on each compact subset of $D,$ and we
obtain the following estimate :%
\begin{equation*}
\left( \forall l\in \mathbb{N}\right) :||a_{p}^{(l)}||_{\infty ,D}\leq \frac{%
\mathcal{P}_{4}}{1-\sqrt[4]{\rho }}\mathcal{P}_{4}^{l}l^{\left( 1+\frac{1}{k}%
\right) l}
\end{equation*}%
Consequently : 
\begin{equation*}
\left( \forall p\in J\left( N\right) \right) :a_{p}\in H_{1}^{k}(D)
\end{equation*}%
Let us consider the function $f\in H_{N}\left( D\right) $ defined by the
formula :%
\begin{equation*}
\left( \forall z\in D\right) :f(z):=\underset{j=0}{\overset{N-1}{\sum }}%
a_{p}(z)\overline{z}^{p}
\end{equation*}%
Then we have for each $\left( l,m\right) \in \mathbb{N}^{2}:$%
\begin{eqnarray*}
\left( \forall z\in D\right) &:& \\
\left \vert \frac{\partial ^{l+m}f}{\partial z^{l}\partial \overline{z}^{m}}%
(z)\right \vert &\leq &\underset{j=0}{\overset{N-1}{\sum }}p!\left \vert
a_{p}^{\left( l\right) }(z)\right \vert \\
&\leq &\frac{N!N\mathcal{P}_{4}}{1-\sqrt[4]{\rho }}\mathcal{P}%
_{4}^{l}l^{\left( 1+\frac{1}{k}\right) l} \\
&\leq &\frac{N!N\mathcal{P}_{4}}{1-\sqrt[4]{\rho }}\mathcal{P}%
_{4}^{l+m}\left( l+m\right) ^{\left( 1+\frac{1}{k}\right) \left( l+m\right) }
\end{eqnarray*}%
It follows that $f\in H_{N}^{k}\left( D\right) .$

The proof of the converse part of the main result is then achieved.

$\square $

\begin{remark}
\textit{It follows easily from the relation (\ref{boundsup}) that if }$%
\left( f_{n}\right) _{n\in 
\mathbb{N}
^{\ast }}$\textit{\ is a sequence of }$N-$\textit{analytic polynomials\ such
that}%
\begin{equation*}
\left( \forall n\in 
\mathbb{N}
^{\ast }\right) :||f_{n}||_{\infty ,\text{ }D_{k,A,n}}\leq C\rho ^{n}
\end{equation*}%
\textit{for some constants }$C>0,$\textit{\ }$A>0$\textit{\ and }$\rho \in
]0,1[$\textit{\ then there exist some constants }$\mathcal{P}\geq 1$ such
that \textit{the following estimates hold for each }$\left( n,\alpha \right)
\in 
\mathbb{N}
^{\ast }\times 
\mathbb{N}
^{2}$ $:$
\end{remark}

\begin{equation*}
\left \Vert D^{\alpha }f_{n}\mathit{\ }\right \Vert _{\infty ,\text{ }D_{k,%
\frac{A}{3},n}}\leq \mathcal{P}^{\left \vert \alpha \right \vert +1}\left
\vert \alpha \right \vert ^{\left( 1+\frac{1}{k}\right) \left \vert \alpha
\right \vert }\sqrt[4]{\rho }^{n}
\end{equation*}

\section{Applications}

\subsection{E. M. Dyn'kin's theorem for the Gevrey class $H_{N}^{k}\left(
D\right) $}

\begin{corollary}
\textit{Let }$f\in H_{N}\left( D\right) .$\textit{\ }
\end{corollary}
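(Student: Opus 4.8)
The plan is to establish the asserted Dyn'kin-type equivalence: for $f\in H_N(D)$ one has $f\in H_N^{k}(D)$ if and only if $f$ admits a pseudoanalytic extension of Gevrey type, i.e. an extension $F$ to $\mathbb{C}$, of class $C^{N}$ and $C^{\infty}$ off $\partial D$, with $\left(\frac{\partial}{\partial\overline z}\right)^{N}F=0$ on $D$ and
$$\left|\left(\frac{\partial}{\partial\overline z}\right)^{N}F(z)\right|\le A\exp\!\left(-B\,\varrho(z,D)^{-k}\right),\qquad z\in\mathbb{C}\setminus\overline D,$$
for suitable constants $A,B>0$. The forward implication will be built from the polynomial expansion furnished by the direct part of the main theorem, glued together by cut-offs; the backward implication will be obtained from the Cauchy--Pompeiu representation of Proposition 2.2 together with the kernel estimates of Proposition 8.2.

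For the direct part, I would start from $f\in H_N^{k}(D)$ and invoke the direct part of the main theorem to obtain constants $C>0$, $R>0$, $\delta\in\,]0,1[$ and $N$-analytic polynomials $(P_n)_{n\ge1}$ with $\|P_n\|_{\infty,D_{k,R,n}}\le C\delta^{n}$, $\sum_{n\ge1}P_n=f$ on $D$, and $d^{\circ}(P_n)\le n^{(k+1)/k}$. I would then fix smooth cut-offs $\chi_n$ equal to $1$ on $\overline D$ and supported in $D_{k,R,n}$, with transition layer the annulus at distance $\approx Rn^{-1/k}$ from $\partial D$, and set $F:=\sum_{n\ge1}\chi_nP_n$. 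On $D$ each $\chi_n\equiv1$, so $F=f$; off $\overline D$ the sum is locally finite, hence $F$ is smooth there. Since $\left(\frac{\partial}{\partial\overline z}\right)^{N}P_n=0$, Leibniz's rule gives
$$\left(\frac{\partial}{\partial\overline z}\right)^{N}F=\sum_{n\ge1}\sum_{j=1}^{N}\binom{N}{j}\left(\frac{\partial}{\partial\overline z}\right)^{N-j}\!P_n\cdot\left(\frac{\partial}{\partial\overline z}\right)^{j}\!\chi_n,$$
a sum supported in the transition layers. At a point $z$ with $t:=\varrho(z,D)$ only the indices $n\approx(R/t)^{k}$ contribute, and the control of the holomorphic components of $P_n$ furnished by the maximum modulus principle, estimate \eqref{maxp2}, turns $\|P_n\|_{\infty,D_{k,R,n}}\le C\delta^{n}$ into a bound for $\left(\frac{\partial}{\partial\overline z}\right)^{N-j}P_n$ on the layer; substituting $n\approx(R/t)^{k}$ converts the geometric factor $\delta^{n}$ into $\exp(-Bt^{-k})$ with $B\approx R^{k}|\ln\delta|$, the polynomial factors coming from $d^{\circ}(P_n)$ and from the derivatives of $\chi_n$ being absorbed. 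The same decay shows that $F$ is $C^{N}$ across $\partial D$, which finishes the forward implication.

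For the converse, assume $f\in H_N(D)$ has an extension $F$ with the stated growth. Fixing $z_0\in D$, I would pick a cut-off $\chi\in C_0^{\infty}(\mathbb{C})$ equal to $1$ on a neighbourhood of $z_0$ and apply the representation of Proposition 2.2 to $\chi F$; near $z_0$ one has $\chi F=f$, so differentiating in $z$ expresses $\frac{\partial^{l+m}f}{\partial z^{l}\partial\overline z^{m}}(z_0)$ as an integral of $\left(\frac{\partial}{\partial\overline z}\right)^{N}(\chi F)$ against $z$-derivatives of the kernel $\frac{(\overline z-\overline\zeta)^{N-1}}{\pi(N-1)!(z-\zeta)}$. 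Splitting $\left(\frac{\partial}{\partial\overline z}\right)^{N}(\chi F)$ into the part where $\chi\equiv1$ --- there it equals $\left(\frac{\partial}{\partial\overline z}\right)^{N}F$, which vanishes on $D$ and is bounded by $A\exp(-B\varrho(\cdot,D)^{-k})$ off $\overline D$ --- and the harmless contribution from the fixed transition region of $\chi$, the whole matter reduces to controlling $\iint \exp(-B\varrho(\zeta,D)^{-k})\,\big|\partial_z^{\,l}(\text{kernel})\big|\,d\nu(\zeta)$. This is exactly the technical bound proved in Proposition 8.2; it yields $\big\|\frac{\partial^{l+m}f}{\partial z^{l}\partial\overline z^{m}}\big\|_{\infty,D}\le B_3^{\,l+m+1}(l+m)^{(1+\frac1k)(l+m)}$, i.e. the Gevrey estimate of Remark 4.2, whence $f\in H_N^{k}(D)$.

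The main obstacle is the converse estimate. One must balance the rapid decay $\exp(-B\varrho^{-k})$ of $\left(\frac{\partial}{\partial\overline z}\right)^{N}F$ against the growth of the $l$-th $z$-derivatives of the singular kernel: the dominant contribution comes from the layer at distance $\approx l^{-1/k}$ from $\partial D$, and optimizing this trade-off is precisely what produces the Gevrey exponent $(1+\frac1k)l$. Carrying out this optimization uniformly in $z_0\in D$, and handling the weak singularity $1/(z-\zeta)$ of the kernel, is the delicate core delegated to Proposition 8.2. In the forward direction the only subtlety is the bookkeeping matching $\delta^{n}$ with $\exp(-Bt^{-k})$ and checking that the cut-off derivatives and the degree growth $n^{(k+1)/k}$ do not overwhelm the exponential gain.
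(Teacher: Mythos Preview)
Your outline matches the paper's proof: forward implication via the polynomial expansion of the main theorem glued by shrinking cut-offs, converse via the integral representation of Proposition~2.2 combined with the kernel estimate of Proposition~8.2. Two small refinements are worth flagging. First, the corollary as stated asks for $F\in C^{\infty}(\mathbb{C})$ with compact support, not merely $C^{N}$ across $\partial D$; the paper obtains this by proving that every series $\sum_{n} D^{\alpha}(h_{n}f_{n})$ is normally convergent on $\mathbb{C}$, using the derivative bounds of Remark~6.1 for $f_{n}$ on $D_{k,A/3,n}$ together with the estimates $|D^{\nu}h_{n}|\le L_{\nu}n^{|\nu|/k}$ for the cut-offs. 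Second, in the converse the paper takes a single cut-off $\Phi\equiv 1$ on a fixed neighbourhood of $\overline{D}$ rather than one localized at each $z_{0}$; this keeps the transition layer at a fixed positive distance from $D$, so its contribution is bounded by $C^{l}l!$ uniformly in $z_{0}$, and the uniformity issue you single out reduces directly to the supremum handled in Proposition~8.2.
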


1. \textit{\ If }$f\in H_{N}^{k}\left( D\right) $\textit{\ then there exists
a function }$F$ $:%
\mathbb{C}
\rightarrow $ $%
\mathbb{C}
$ \textit{of class }$C^{\infty }$\textit{\ on }$%
\mathbb{C}
\;$\textit{with compact\ support such that }$:$%
\begin{equation*}
\left \{ 
\begin{array}{c}
F|_{D}=f \\ 
\left( \forall z\in \mathbb{C}\setminus \overline{D}\right) :\left \vert
\left( \frac{\partial }{\partial \overline{z}}\right) ^{N}F(z)\right \vert
\leq C_{1}\exp \left[ -C_{2}\left( \left \vert z\right \vert -1\right) ^{-k}%
\right]%
\end{array}%
\right.
\end{equation*}%
\textit{where }$C_{1},$\textit{\ }$C_{2}$\textit{\ }$>0$\textit{\ are
constants.}

2. \textit{Conversly, if there exists a function }$F$ $:\mathcal{\ U}%
\rightarrow $ $%
\mathbb{C}
$ \textit{of class }$C^{\infty }$\textit{\ on an open neighborhood }$%
\mathcal{U}\;$\textit{of the closed unit disk }$\overline{D}$\textit{\ such
that }$:$%
\begin{equation*}
\left \{ 
\begin{array}{c}
F|_{D}=f \\ 
\left( \forall z\in \mathcal{U}\setminus \overline{D}\right) :\left \vert
\left( \frac{\partial }{\partial \overline{z}}\right) ^{N}F(z)\right \vert
\leq C_{1}\exp \left[ -C_{2}\left( \left \vert z\right \vert -1\right) ^{-k}%
\right]%
\end{array}%
\right.
\end{equation*}%
\textit{where }$C_{1},$\textit{\ }$C_{2}$\textit{\ }$>0$\textit{\ are
constants then }$f\in H_{N}^{k}\left( D\right) .$

\begin{proof}
1. Assume that $f\in H_{N}^{k}\left( D\right) $\ then, according to theorem $%
5.$ $1.$, there exists constants $A\in ]0,1[,$ $C>0,$ $\rho \in ]0,1[$ and a
sequence of $N-$polynomial functions $(f_{n})_{n\in 
\mathbb{N}
}$ such that :%
\begin{equation*}
\left \{ 
\begin{array}{c}
\left( \forall n\in 
\mathbb{N}
\right) :\Vert f_{n}\Vert _{\infty ,D_{k,A,n}}\leq C\rho ^{n} \\ 
\left( \forall x\in D\right) :\sum_{n=0}^{+\infty }f_{n}(x)=f(x)%
\end{array}%
\right.
\end{equation*}%
On the other hand there exist (\cite{TOU}, lemma $3.3.$, page $77$) for each 
$n\in 
\mathbb{N}
$ a function $h_{n}:%
\mathbb{C}
\longrightarrow \lbrack 0,1]$ and a family of positive constants $(L_{\nu
})_{\nu \in 
\mathbb{N}
^{2}}$ such that : 
\begin{equation*}
\left \{ 
\begin{array}{c}
\left( \forall z\in D_{k,\frac{A}{4},n}\right) :h_{n}(z)=1 \\ 
\left( \forall \;z\in 
\mathbb{C}
\setminus D_{k,\frac{A}{3},n}\right) :h_{n}(z)=0 \\ 
\left( \forall \nu \in 
\mathbb{N}
^{2}\right) \left( \forall z\in 
\mathbb{R}
^{2}\right) :|D^{\nu }h_{n}(z)|\leq L_{\nu }n^{\frac{|\nu |}{k}}%
\end{array}%
\right.
\end{equation*}%
We set for each $p\in 
\mathbb{N}
:$%
\begin{equation*}
\mathcal{M}_{p}:=\underset{\left \vert \nu \right \vert \leq p}{\max }L_{\nu
}
\end{equation*}%
We denote by $F_{n}$ the function defined by :%
\begin{equation*}
\left \{ 
\begin{array}{c}
\left( \forall z\in D_{k,A,n}\right) :F_{n}(z)=h_{n}(z)f_{n}(z) \\ 
\left( \forall z\in 
\mathbb{C}
\setminus D_{k,A,n}\right) :F_{n}(z)=0%
\end{array}%
\right.
\end{equation*}%
The function $F_{n}$ is of class $C^{\infty }$ on $%
\mathbb{C}
$ and satisfies the condition :%
\begin{equation*}
{F_{n}|}_{D_{k,\frac{A}{4},n}}={f_{n}|}_{D_{k,\frac{A}{4},n}}
\end{equation*}%
Since :%
\begin{equation*}
\text{ }\left( \forall n\in 
\mathbb{N}
\right) :||F_{n}||_{\infty ,\mathbb{C}}\leq C\rho ^{n}
\end{equation*}%
it follows that the function series $\sum F_{n}$ is uniformly convergent on $%
\mathbb{C}
$ to a continuous function $F$ with compact support contained in $D_{A}.$
Furthermore we have for all $z\in D:$%
\begin{eqnarray*}
F(z) &=&\underset{n=0}{\overset{+\infty }{\sum }}F_{n}(z) \\
&=&\underset{n=0}{\overset{+\infty }{\sum }}f_{n}(z) \\
&=&f(z)
\end{eqnarray*}%
Thence $F$ is an extension to $%
\mathbb{C}
$ of $f.$ Let $n\in 
\mathbb{N}
,$ $\alpha \in 
\mathbb{N}
^{2}$ and $z\in 
\mathbb{C}
.$ If $z\in 
\mathbb{C}
\setminus D_{k,\frac{A}{3},n}$ then we will have : 
\begin{equation*}
D^{\alpha }F_{n}(z)=0
\end{equation*}%
Now if $z\in D_{k,\frac{A}{3},n}$ then we have $:$%
\begin{eqnarray*}
&&|D^{\alpha }F_{n}(z)| \\
&\leq &\sum_{\beta \preccurlyeq \alpha }(_{\beta }^{\alpha })|D^{\beta
}h_{n}(z)|\text{ }|D^{\alpha -\beta }f_{n}(z)|
\end{eqnarray*}%
But there exists, thanks to remark $4.2.$, a constant $\mathcal{P}_{5}\geq 1$
such that the following estimate holds for each $v\in 
\mathbb{N}
^{2}$ :%
\begin{equation*}
\left \Vert D^{v}f_{n}\mathit{\ }\right \Vert _{\infty ,\text{ }D_{k,\frac{A%
}{3},n}}\leq \mathcal{P}_{5}^{\left \vert v\right \vert +1}\left \vert
v\right \vert ^{\left( 1+\frac{1}{k}\right) \left \vert v\right \vert }\sqrt[%
4]{\rho }^{n}
\end{equation*}%
Hence we can write :%
\begin{eqnarray*}
&&|D^{\alpha }F_{n}(z)| \\
&\leq &\sum_{\beta \preccurlyeq \alpha }(_{\beta }^{\alpha })\mathcal{M}%
_{\left \vert \alpha \right \vert }n^{\frac{|\beta |}{k}}|D^{\alpha -\beta
}f_{n}(z)| \\
&\leq &\sum_{\beta \preccurlyeq \alpha }\mathcal{M}_{\left \vert \alpha
\right \vert }(_{\beta }^{\alpha })n^{\frac{|\beta |}{k}}\mathcal{P}%
_{5}^{\left \vert \alpha -\beta \right \vert +1}\left \vert \alpha -\beta
\right \vert ^{\left( 1+\frac{1}{k}\right) \left \vert \alpha -\beta \right
\vert }\sqrt[4]{\rho }^{n} \\
&\leq &\mathcal{P}_{5}\mathcal{M}_{\left \vert \alpha \right \vert }\left( 2%
\mathcal{P}_{5}\right) ^{\left \vert \alpha \right \vert }\left \vert \alpha
\right \vert ^{\left( 1+\frac{1}{k}\right) \left \vert \alpha \right \vert
}n^{\frac{|\alpha |}{k}}\sqrt[4]{\rho }^{n}
\end{eqnarray*}%
It follows that the function series $\sum D^{\alpha }F_{n}(z)$ is for all $%
\alpha \in 
\mathbb{N}
^{2}$ normally convergent on $%
\mathbb{C}
.$ Hence the function $F=\sum_{n=1}^{\infty }F_{n}$ is of class $C^{\infty }$
on $%
\mathbb{C}
$ and we have for $\ $each $z\in 
\mathbb{C}
\setminus D$ and $\alpha \in 
\mathbb{N}
^{2}:$%
\begin{equation*}
\left \{ 
\begin{array}{c}
D^{\alpha }F(z)=\sum_{n=1}^{+\infty }D^{\alpha }F_{n}(z) \\ 
\left( \frac{\partial }{\partial \overline{z}}\right)
^{N}F(z)=\sum_{n=1}^{+\infty }\left( \frac{\partial }{\partial \overline{z}}%
\right) ^{N}F_{n}(z)%
\end{array}%
\right.
\end{equation*}%
On the other hand we have for each $n\in 
\mathbb{N}
^{\ast }:$%
\begin{equation*}
\left( \frac{\partial }{\partial \overline{z}}\right) ^{N}F_{n}(z)=0\text{ if%
}\; \varrho (z,D)\geq \frac{A}{3}n^{-\frac{1}{k}}\; \text{or}\; \; \varrho
(z,D)<\frac{A}{4}n^{-\frac{1}{k}}
\end{equation*}%
But if $\; \frac{A}{4}n^{-\frac{1}{k}}\leq \varrho (z,D)\leq $ $\frac{A}{3}%
n^{-\frac{1}{k}}$ then we have the following estimates : 
\begin{eqnarray*}
\left \vert \left( \frac{\partial }{\partial \overline{z}}\right)
^{N}F_{n}(z)\right \vert &\leq &\underset{p=0}{\overset{N}{\sum }}\frac{%
\binom{N}{p}}{2^{N}}\left \vert D^{\left( p,N-p\right) }F_{n}(z)\right \vert
\\
&\leq &\mathcal{P}_{5}\mathcal{M}_{N}\left( 2\mathcal{P}_{5}\right)
^{N}N^{\left( 1+\frac{1}{k}\right) N}n^{\frac{N}{k}}\sqrt[4]{\rho }^{n} \\
&\leq &\mathcal{P}_{6}e^{\frac{\ln \left( \rho \right) }{8}n}\underset{t\geq
0}{\sup }t^{\frac{N}{k}}e^{-\frac{\ln \left( \frac{1}{\rho }\right) }{8}t}
\end{eqnarray*}%
where $\mathcal{P}_{6}:=\mathcal{P}_{5}\mathcal{M}_{N}\left( 2\mathcal{P}%
_{5}\right) ^{N}N^{\left( 1+\frac{1}{k}\right) N}.$ The relation (\ref%
{boundsup}) entails then that the following relation holds :%
\begin{equation*}
\left \vert \left( \frac{\partial }{\partial \overline{z}}\right)
^{N}F_{n}(z)\right \vert \leq \mathcal{P}_{6}\left( \frac{8N}{ek\ln \left( 
\frac{1}{\rho }\right) }\right) ^{\frac{N}{k}}e^{\frac{\ln \left( \rho
\right) }{8}n}
\end{equation*}%
Let us then set $\mathcal{P}_{7}:=\frac{\mathcal{P}_{6}\left( \frac{8N}{%
ek\ln \left( \frac{1}{\rho }\right) }\right) ^{\frac{N}{k}}}{1-e^{\frac{\ln
\left( \rho \right) }{8}}}$ and $\mathcal{P}_{8}:=\frac{1}{4}\left( \frac{A}{%
4}\right) ^{k}\ln \left( \frac{1}{\rho }\right) .$Thence we have for every $%
z\in 
\mathbb{C}
\setminus \overline{D}$ the following estimate :%
\begin{eqnarray*}
\left \vert \left( \frac{\partial }{\partial \overline{z}}\right)
^{N}F(z)\right \vert &\leq &\left( 1-e^{\frac{\ln \left( \rho \right) }{8}%
}\right) \mathcal{P}_{7}\sum_{\frac{A}{4}n^{-\frac{1}{k}}\leq \varrho
(z,D)\leq \frac{A}{3}n^{-\frac{1}{k}}}e^{\frac{\ln \left( \rho \right) }{4}n}
\\
&\leq &\left( 1-e^{\frac{\ln \left( \rho \right) }{8}}\right) \mathcal{P}%
_{7}\sum_{\left( \frac{A}{4\varrho (z,D)}\right) ^{k}\leq n}e^{\frac{\ln
\left( \rho \right) }{4}n} \\
&\leq &\mathcal{P}_{7}\exp \left( -\mathcal{P}_{8}\varrho (z,D)^{-k}\right)
\\
&\leq &\mathcal{P}_{7}\exp \left( -\mathcal{P}_{8}\left( \left \vert z\right
\vert -1\right) ^{-k}\right)
\end{eqnarray*}%
2. \ Let $f\in H_{N}\left( D\right) .$\ Assume that there exists a function $%
F$ $:\  \mathcal{U}\rightarrow $ $%
\mathbb{C}
$ of class $C^{\infty }$\ on a neighborhood $\mathcal{U}\;$of the closed
unit disk $\overline{D}$ such that $:$%
\begin{equation*}
\left \{ 
\begin{array}{c}
F|_{D}=f \\ 
\left( \forall z\in \mathcal{U}\setminus \overline{D}\right) :\left \vert
\left( \frac{\partial }{\partial \overline{z}}\right) ^{N}F(z)\right \vert
\leq \mathcal{P}_{9}\exp \left( -\mathcal{P}_{10}\left( \left \vert z\right
\vert -1\right) ^{-k}\right)%
\end{array}%
\right.
\end{equation*}%
where $\mathcal{P}_{9},$\ $\mathcal{P}_{10}$\ $>0$\ are constants$.$ Let $%
R_{0}>0$ be such that the disk $D_{1+R_{0}}$ is contained in $\mathcal{U}$.
There exist (\cite{TOU}) a function $\Phi :%
\mathbb{C}
\longrightarrow \lbrack 0,1]$ of class $C^{\infty }$ on $%
\mathbb{C}
$ such that : 
\begin{equation*}
\left \{ 
\begin{array}{c}
\left( \forall z\in \overline{D}_{1+\frac{R_{0}}{3}}\right) :\Phi (z)=1 \\ 
\left( \forall z\in 
\mathbb{C}
\setminus \overline{D}_{1+\frac{2R_{0}}{3}}\right) :\Phi (z)=0%
\end{array}%
\right.
\end{equation*}%
We denote by $\widetilde{F}$ the function defined by :%
\begin{equation*}
\left \{ 
\begin{array}{c}
\left( \forall z\in \overline{D}_{1+\frac{2R_{0}}{3}}\right) :\widetilde{F}%
(z)=\Phi (z)F(z) \\ 
\left( \forall z\in 
\mathbb{C}
\setminus \overline{D}_{1+\frac{2R_{0}}{3}}\right) :\widetilde{F}(z)=0%
\end{array}%
\right.
\end{equation*}%
Then it is clear that the function $\widetilde{F}$ is an extension of the
function $f$ to $%
\mathbb{C}
,$ and that $\widetilde{F}$ is of class $C^{\infty }$ on $%
\mathbb{C}
$ with compact support contained in $\overline{D}_{1+\frac{2R_{0}}{3}}$ and
that the following estimate holds :%
\begin{equation*}
\left( \forall z\in \mathcal{%
\mathbb{C}
}\setminus \overline{D}\right) :\left \vert \left( \frac{\partial }{\partial 
\overline{z}}\right) ^{N}\widetilde{F}(z)\right \vert \leq \mathcal{P}%
_{11}\exp \left( -\mathcal{P}_{12}\left( \left \vert z\right \vert -1\right)
^{-k}\right)
\end{equation*}%
with some convenient constants $\mathcal{P}_{11},$\ $\mathcal{P}_{12}$\ $>0.$
Thanks to proposition $2.1.$, the following relations hold for every $z\in 
\mathbb{C}
:$%
\begin{eqnarray*}
\widetilde{F}\left( z\right) &=&\underset{%
\mathbb{C}
}{\diint }\frac{\left( \overline{z}-\overline{\zeta }\right) ^{N-1}}{\pi
\left( N-1\right) !\left( z-\zeta \right) }\left( \frac{\partial }{\partial 
\overline{z}}\right) ^{N}\widetilde{F}\left( \zeta \right) d\nu \left( \zeta
\right) \\
&=&\underset{D_{1+\frac{2R_{0}}{3}}\setminus \overline{D}}{\diint }\frac{%
\left( \overline{z}-\overline{\zeta }\right) ^{N-1}}{\pi \left( N-1\right)
!\left( z-\zeta \right) }\left( \frac{\partial }{\partial \overline{z}}%
\right) ^{N}\widetilde{F}\left( \zeta \right) d\nu \left( \zeta \right)
\end{eqnarray*}%
Let us denote by $\Psi $ the function : 
\begin{equation*}
\begin{array}{cccc}
\Psi : & 
\mathbb{C}
\times \left( D_{1+R_{0}}\setminus \overline{D}\right) & \rightarrow & 
\mathbb{C}
\\ 
& \left( z,\zeta \right) & \mapsto & \frac{\left( \overline{z}-\overline{%
\zeta }\right) ^{N-1}}{\pi \left( N-1\right) !\left( z-\zeta \right) }\left( 
\frac{\partial }{\partial \overline{z}}\right) ^{N}\widetilde{F}\left( \zeta
\right)%
\end{array}%
\end{equation*}%
Then we have the following estimates for each $\left( l,m\right) \in 
\mathbb{N}
^{2}:$%
\begin{eqnarray*}
&&\underset{\left( z,\zeta \right) \in D\times D_{1+\frac{2R_{0}}{3}},\text{ 
}\zeta \notin \overline{D}\cup \left \{ z\right \} }{\sup }\left \vert \frac{%
\partial ^{l+m}\Psi }{\partial z^{l}\partial \overline{z}^{m}}\left( z,\zeta
\right) \right \vert \\
&\leq &\underset{\left( z,\zeta \right) \in D\times D_{1+\frac{2R_{0}}{3}},%
\text{ }\zeta \notin \overline{D}\cup \left \{ z\right \} }{\sup }\max
\left( \left \vert z-\zeta \right \vert ,1\right) ^{N-1}\frac{\left \vert
\left( \frac{\partial }{\partial \overline{z}}\right) ^{N}\widetilde{F}%
\left( \zeta \right) \right \vert l!}{\pi \left \vert z-\zeta \right \vert
^{l+1}} \\
&\leq &\pi ^{-1}\mathcal{P}_{11}\left( 2\left( 1+\frac{2R_{0}}{3}\right)
\right) ^{N-1}\cdot \\
&&\cdot \underset{\left( z,\zeta \right) \in D\times D_{1+\frac{2R_{0}}{3}},%
\text{ }\zeta \notin \overline{D}\cup \left \{ z\right \} }{\sup }\frac{\exp
\left( -\mathcal{P}_{12}\left( \left \vert \zeta \right \vert -1\right)
^{-k}\right) l!}{\left \vert z-\zeta \right \vert ^{l+1}}
\end{eqnarray*}

But there exists, according to proposition $8.2.$\ in the appendix below$,$
a constant $\mathcal{P}_{13}>0$ such that : 
\begin{eqnarray*}
&&\underset{\left( z,\zeta \right) \in D\times D_{1+\frac{2R_{0}}{3}},\text{ 
}\zeta \notin \overline{D}\cup \left \{ z\right \} }{\sup }\frac{\exp \left(
-\mathcal{P}_{12}\left( \left \vert \zeta \right \vert -1\right)
^{-k}\right) l!}{\left \vert z-\zeta \right \vert ^{l+1}} \\
&\leq &\frac{1}{\pi ^{-1}\mathcal{P}_{11}\left( 2\left( 1+\frac{2R_{0}}{3}%
\right) \right) ^{N-1}}\mathcal{P}_{13}^{l+1}l^{\left( 1+\frac{1}{k}\right)
l}
\end{eqnarray*}

Consequently the following condition holds :%
\begin{equation*}
\left( \forall \left( l,m\right) \in 
\mathbb{N}
^{2}\right) :\underset{\left( z,\zeta \right) \in D\times D_{1+\frac{2R_{0}}{%
3}},\text{ }\zeta \notin \overline{D}\cup \left \{ z\right \} }{\sup }\left
\vert \frac{\partial ^{l+m}\Psi }{\partial z^{l}\partial \overline{z}^{m}}%
\left( z,\zeta \right) \right \vert \leq \mathcal{P}_{13}^{l+1}l^{\left( 1+%
\frac{1}{k}\right) l}
\end{equation*}%
It follows that, when applying the differential operator $\mathcal{L}_{lm}:=%
\frac{\partial ^{l+m}}{\partial z^{l}\partial \overline{z}^{m}}$ to $F,$ we
can interchange $\mathcal{L}_{lm}$ and the integral sign $\underset{D_{1+%
\frac{2R_{0}}{3}}\setminus \overline{D}}{\diint }$ to write for each $z\in
D: $%
\begin{eqnarray*}
\frac{\partial ^{l+m}f}{\partial z^{l}\partial \overline{z}^{m}}\left(
z\right) &=&\frac{\partial ^{l+m}\widetilde{F}}{\partial z^{l}\partial 
\overline{z}^{m}}\left( z\right) \\
&=&\underset{D_{1+\frac{2R_{0}}{3}}\setminus \overline{D}}{\diint }\frac{%
\partial ^{l+m}\Psi }{\partial z^{l}\partial \overline{z}^{m}}\left( z,\zeta
\right) d\nu (\zeta )
\end{eqnarray*}%
Hence we can write for each $z\in D$ : 
\begin{equation*}
\left \vert \frac{\partial ^{l+m}\widetilde{F}}{\partial z^{l}\partial 
\overline{z}^{m}}\left( z\right) \right \vert \leq \frac{4\pi R_{0}\left(
R_{0}+3\right) }{9}\mathcal{P}_{13}^{l+m+1}\left( l+m\right) ^{\left( 1+%
\frac{1}{k}\right) \left( l+m\right) }
\end{equation*}%
Consequently $f\in H_{N}^{k}\left( D\right) .$

The proof of the corollary is complete.
\end{proof}

\subsection{Degree of the best uniform $N$-polynomial approximation of
functions of the Gevrey class $H_{N}^{k}\left( D\right) $}

\begin{corollary}
\textit{For every }$k>0,$\textit{\ the set of the functions }%
\begin{equation*}
\begin{array}{cccc}
\Theta _{\alpha ,\beta }: & 
\mathbb{R}
^{+} & \rightarrow & 
\mathbb{R}
^{+} \\ 
& t & \mapsto & \alpha \exp \left( -\beta t^{\frac{k}{k+1}}\right)%
\end{array}%
\end{equation*}%
\textit{where }$\left( \alpha ,\beta \right) $ \textit{runs over} $%
\mathbb{R}
^{+\ast }\times 
\mathbb{R}
^{+\ast },$ \textit{is a degree of the best uniform }$N-$\textit{polynomial} 
\textit{approximation of the Gevrey class }$H_{N}^{k}\left( D\right) $%
\textit{.}
\end{corollary}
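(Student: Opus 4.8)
The plan is to verify the two-way implication in the definition of a degree of the best uniform $N$-polynomial approximation by applying the main result (Theorem~5.1) in each direction; throughout we identify $H_N^k(D)$ with its image in $C(\overline{D})$ under the (unique) continuous extension of its elements to $\overline{D}$, which exists because their derivatives are bounded. For the direct inclusion, let $f\in H_N^k(D)$: the direct part of Theorem~5.1 furnishes $C>0$, $R>0$, $\delta\in\,]0,1[$ and $N$-analytic polynomials $(P_n)_{n\ge1}$ with $\|P_n\|_{\infty,D_{k,R,n}}\le C\delta^{n}$, $d^{\circ}(P_n)\le n^{(k+1)/k}$ and $\sum_{n\ge1}P_n=f$ on $D$, and since $\overline{D}\subset D_{k,R,n}$ the series converges uniformly on $\overline{D}$, so $f\in C(\overline{D})$. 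Given $n\in\mathbb{N}$, put $M:=\lfloor n^{k/(k+1)}\rfloor$ and $S_M:=\sum_{m=1}^{M}P_m$; then $d^{\circ}(S_M)\le M^{(k+1)/k}\le n$, so $S_M\in\Pi_{N,n}$ and $\mathcal{E}_{N,n}(f)\le\|f-S_M\|_{\infty,\overline{D}}\le\sum_{m>M}\|P_m\|_{\infty,D_{k,R,m}}\le\tfrac{C}{1-\delta}\delta^{M}$. As $M\ge n^{k/(k+1)}-1$, this is at most $\tfrac{C}{\delta(1-\delta)}\exp\!\big(-\ln(1/\delta)\,n^{k/(k+1)}\big)=\Theta_{\alpha,\beta}(n)$ with $\alpha:=C/(\delta(1-\delta))$ and $\beta:=\ln(1/\delta)>0$ (for $n=0$ the bound is clear, since $\|f\|_{\infty,\overline{D}}\le C\delta/(1-\delta)\le\alpha$). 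Hence $f$ satisfies the right-hand side of the equivalence.

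For the converse, let $f\in C(\overline{D})$ satisfy $\mathcal{E}_{N,n}(f)\le\alpha\exp(-\beta n^{k/(k+1)})$ for all $n$, with $\alpha,\beta>0$. Choose $p_n\in\Pi_{N,n}$ with $\|f-p_n\|_{\infty,\overline{D}}\le 2\mathcal{E}_{N,n}(f)$, set $q_0:=p_0$ and $q_n:=p_n-p_{n-1}\in\Pi_{N,n}$ for $n\ge1$; then $f=\sum_{n\ge0}q_n$ absolutely and uniformly on $\overline{D}$, and $\|q_n\|_{\infty,\overline{D}}\le 4\alpha\exp(-\beta(n-1)^{k/(k+1)})$ for $n\ge1$. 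Regroup this series along the blocks $B_j:=\{\,n\in\mathbb{N}^{\ast}:\lfloor(j-1)^{(k+1)/k}\rfloor<n\le\lfloor j^{(k+1)/k}\rfloor\,\}$ ($j\ge1$), which are nonempty, pairwise disjoint and exhaust $\mathbb{N}^{\ast}$ (for $j\ge2$ one has $j^{(k+1)/k}-(j-1)^{(k+1)/k}\ge(k+1)/k>1$, so $\lfloor j^{(k+1)/k}\rfloor>\lfloor(j-1)^{(k+1)/k}\rfloor$), and set $g_1:=q_0+\sum_{n\in B_1}q_n$, $g_j:=\sum_{n\in B_j}q_n$ for $j\ge2$. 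Each $g_j$ is an $N$-analytic polynomial with $d^{\circ}(g_j)\le j^{(k+1)/k}$; moreover $|B_j|\le\tfrac{k+1}{k}j^{1/k}+1$ while every $n\in B_j$ has $n-1\ge\lfloor(j-1)^{(k+1)/k}\rfloor\ge\tfrac{1}{2}(j-1)^{(k+1)/k}$ once $j$ is large, so $\|g_j\|_{\infty,\overline{D}}\le K_1\,j^{1/k}\exp(-\beta c\,(j-1))$ with $c:=2^{-k/(k+1)}\in\,]0,1[$ and $K_1>0$.

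The crucial step is to pass from $\overline{D}$ to the slightly larger disks $D_{k,R,j}=\Delta(0,1+Rj^{-1/k})$. By the Bernstein--Walsh inequality for $N$-analytic polynomials (Proposition~3.3), for $z$ with $1\le|z|<1+Rj^{-1/k}$ one has $|g_j(z)|\le(2^{N+1}-1)\mathcal{J}_N(\tfrac{1}{2},1)\|g_j\|_{\infty,\overline{D}}\,|z|^{d^{\circ}(g_j)+N-1}$, and $(1+Rj^{-1/k})^{j^{(k+1)/k}+N-1}\le(1+R)^{N-1}e^{Rj}$ because $j^{-1/k}\cdot j^{(k+1)/k}=j$; combining this with the bound on $\overline{D}$ gives $\|g_j\|_{\infty,D_{k,R,j}}\le K_2\,j^{1/k}e^{(R-\beta c)j}$ for a constant $K_2$. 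Choosing the enlargement radius $R:=\beta c/2>0$ makes this exponent negative, so absorbing the polynomial factor yields $\|g_j\|_{\infty,D_{k,\beta c/2,j}}\le K_3\,\delta_0^{\,j}$ for suitable $K_3>0$ and $\delta_0\in\,]0,1[$. The converse part of Theorem~5.1, applied to $(g_j)_{j\ge1}$, then produces a function $g\in H_N^k(D)$ with $\sum_j g_j=g$ uniformly on $D$; since $\sum_n q_n$ converges absolutely in $C(\overline{D})$ the regrouping is legitimate, whence $\sum_j g_j=\sum_n q_n=f$ on $\overline{D}$ and so $f|_D=g\in H_N^k(D)$. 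The two inclusions together are precisely the asserted equivalence.

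The main obstacle is this converse: the hypothesis supplies only the sub-geometric decay $\exp(-\beta n^{k/(k+1)})$, to which the converse part of Theorem~5.1 cannot be applied directly. Regrouping into blocks of length $\asymp j^{1/k}$ converts the decay accumulated across block $j$ into genuine geometric decay in $j$, and then the enlargement radius $R$ must be taken proportional to $\beta$ so that the growth $e^{Rj}$ produced by the Bernstein--Walsh estimate on $D_{k,R,j}$ is swallowed by that decay; all remaining estimates are routine.
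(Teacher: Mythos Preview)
Your argument is correct and follows essentially the same route as the paper's own proof: for the direct part you truncate the series provided by Theorem~5.1 at level $M=\lfloor n^{k/(k+1)}\rfloor$ (the paper writes the same partial sum as $Q_n=\sum_{j^{(k+1)/k}\le n}P_j$), and for the converse you telescope best approximants, regroup into blocks of size $\asymp j^{1/k}$, apply the Bernstein--Walsh inequality for $N$-analytic polynomials (Proposition~3.3) to inflate to $D_{k,R,j}$, and invoke the converse part of Theorem~5.1. The paper does exactly this (with block index and summation index swapped in notation, and with the final asymptotics packaged into Proposition~8.3), so the two proofs coincide up to bookkeeping; your extra care about the infimum in $\mathcal{E}_{N,n}$ and about the continuous extension to $\overline{D}$ is harmless.
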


\begin{proof}
1. Let $f\in H_{N}^{k}\left( D\right) .$ According to the main result of
this paper there exists a sequence $(P_{n})_{n\in 
\mathbb{N}
^{\ast }}$\ of $N-$analytic polynomials\ such that :%
\begin{equation*}
\left \{ 
\begin{array}{c}
\left( \forall n\in 
\mathbb{N}
^{\ast }\right) :||P_{n}||_{\infty ,D_{k,R,n}}\leq C\delta ^{n} \\ 
\left( \forall z\in D\right) :\overset{+\infty }{\underset{n=1}{\sum }}%
P_{n}(z)=F(z) \\ 
\left( \forall n\in 
\mathbb{N}
^{\ast }\right) :d%
{{}^\circ}%
\left( P_{n}\right) \leq n^{\frac{k+1}{k}}%
\end{array}%
\right.
\end{equation*}%
where $C>0$ and $\delta \in ]0,1[$ are constants. Let us denote for each $%
n\in 
\mathbb{N}
^{\ast }$ by $Q_{n}$ the finite sum $Q_{n}:=\underset{j^{\frac{k+1}{k}}\leq n%
}{\sum }P_{j}.$ Then $Q_{n}\in \Pi _{N,n}$ and the following inequalities
hold :%
\begin{eqnarray*}
\mathcal{E}_{N,n}\left( f\right) &\leq &\left \Vert f-Q_{n}\right \Vert
_{\infty ,D} \\
&\leq &\underset{j^{\frac{k+1}{k}}>n}{\sum }\left \Vert P_{j}\right \Vert
_{\infty ,D} \\
&\leq &C\underset{j^{\frac{k+1}{k}}>n}{\sum }\delta ^{j} \\
&\leq &\frac{C}{1-\delta }\delta ^{n^{\frac{k}{k+1}}}
\end{eqnarray*}%
Consequently we have for each $n\in 
\mathbb{N}
:$%
\begin{equation*}
\mathcal{E}_{N,n}\left( f\right) \leq \mathcal{P}_{14}\exp \left( -\mathcal{P%
}_{1\emph{5}}n^{\frac{k}{k+1}}\right)
\end{equation*}%
where $\mathcal{P}_{14}:=\max \left( \frac{C}{1-\delta },\mathcal{E}%
_{N,0}\left( f\right) \right) >0$ and $\mathcal{P}_{1\emph{5}}:=\ln \left( 
\frac{1}{\delta }\right) >0.$

2. Conversly, let $f\in C\left( \overline{D}\right) $ which fullfiles the
following condition :%
\begin{equation*}
\left( \forall n\in 
\mathbb{N}
\right) :\mathcal{E}_{N,n}\left( f\right) \leq \mathcal{P}_{16}\exp \left( -%
\mathcal{P}_{17}n^{\frac{k}{k+1}}\right)
\end{equation*}%
where $\mathcal{P}_{16},\mathcal{P}_{17}>0$ are constants. Then there exists
for each $n\in 
\mathbb{N}
,$ a function $W_{n}\in \Pi _{N,n}$ such that :$\  \  \  \  \  \  \  \  \  \  \  \  \  \
\  \  \  \  \  \  \  \  \  \  \  \  \  \  \  \  \  \  \  \  \  \  \  \  \  \  \  \  \  \  \  \  \  \  \  \  \  \
\  \  \  \  \  \  \  \  \  \  \  \  \ $%
\begin{equation*}
\left \Vert f-W_{n}\right \Vert _{\infty ,D}\leq \mathcal{P}_{16}\exp \left(
-\mathcal{P}_{17}n^{\frac{k}{k+1}}\right)
\end{equation*}%
We denote for each $n\in 
\mathbb{N}
^{\ast },$ by $Y_{n}$ the finite sum$\underset{n^{\frac{k+1}{k}}\leq
j<\left( n+1\right) ^{\frac{k+1}{k}}}{\sum }\left( W_{j}-W_{j-1}\right) .$
Then $Y_{n}\in \Pi _{N}$ and we obtain, for every $z\in D_{k,\frac{\beta }{3}%
,n},$ by virtue of the proposition $3.3.$, the following estimates $:$%
\begin{eqnarray*}
&&\left \vert Y_{n}\left( z\right) \right \vert \leq \underset{n^{\frac{k+1}{%
k}}\leq j<\left( n+1\right) ^{\frac{k+1}{k}}}{\sum }\left \vert
W_{j}(z)-W_{j-1}(z)\right \vert \\
&\leq &\underset{n^{\frac{k+1}{k}}\leq j<\left( n+1\right) ^{\frac{k+1}{k}}}{%
\sum }\left( 2^{N+1}-1\right) \mathcal{J}_{N}\left( \frac{1}{2},1\right)
\cdot \\
&&\cdot \mathcal{P}_{16}\left( \exp \left( -\mathcal{P}_{17}j^{\frac{k}{k+1}%
}\right) +\exp \left( -\mathcal{P}_{17}\left( j-1\right) ^{\frac{k}{k+1}%
}\right) \right) \left \vert z\right \vert ^{j+N} \\
&\leq &\mathcal{P}_{16}\left( 2^{N+1}-1\right) \mathcal{J}_{N}\left( \frac{1%
}{2},1\right) \exp \left( -\mathcal{P}_{17}n\right) \cdot \\
&&\cdot \underset{n^{\frac{k+1}{k}}\leq j<\left( n+1\right) ^{\frac{k+1}{k}}}%
{\sum }\left( 1+\exp \left( \mathcal{P}_{17}\left( j^{\frac{k}{k+1}}-\left(
j-1\right) ^{\frac{k}{k+1}}\right) \right) \right) \cdot \\
&&\cdot \left( 1+\frac{\mathcal{P}_{17}}{3}n^{-\frac{1}{k}}\right) ^{j+N}
\end{eqnarray*}%
But, according to the proposition $8.3.$ in the appendix below, we have the
following asymptotic estimates :%
\begin{eqnarray*}
&&\exp \left( -\mathcal{P}_{17}n\right) \underset{n\leq j^{\frac{k}{k+1}}<n+1%
}{\sum }\left( 1+\exp \left( \mathcal{P}_{17}\left( j^{\frac{k}{k+1}}-\left(
j-1\right) ^{\frac{k}{k+1}}\right) \right) \right) \cdot \\
&&\cdot \left( 1+\frac{\mathcal{P}_{17}}{3}n^{-\frac{1}{k}}\right) ^{j+N} \\
&=&\underset{n\rightarrow +\infty }{O}\left( \left( e^{-\frac{\mathcal{P}%
_{17}}{4}}\right) ^{n}\right)
\end{eqnarray*}

Consequently there exist a constant $\mathcal{P}_{18}>0$ such that the
following estimate holds for each $n\in 
\mathbb{N}
:$ 
\begin{equation*}
\left \Vert Y_{n}\right \Vert _{\infty ,D_{k,\frac{\beta }{3},n}}\leq 
\mathcal{P}_{18}\left( e^{-\frac{\mathcal{P}_{17}}{4}}\right) ^{n}
\end{equation*}%
Since the function series $\sum Y_{n}$ is uniformly convergent on $D$ to the
function $f$ it follows, thanks to theorem $5.$ $1.$, that $f\in
H_{N}^{k}\left( D\right) .$

The proof of the corollary is then achieved.
\end{proof}

\section{Appendix}

\begin{proposition}
For each constant $\mathcal{B}>0,$ there exists a constant $\mathcal{D}%
_{1}>0 $ such that the following estimate holds $:$ 
\begin{eqnarray*}
\left( \forall n\in 
\mathbb{N}
\right) &:&e^{-\mathcal{B}n}\left( \left( n+1\right) ^{\frac{k+1}{k}}-n^{%
\frac{k+1}{k}}+1\right) \cdot \\
&&\cdot \left( 1+\frac{\mathcal{B}}{2}n^{-\frac{1}{k}}\right) ^{n^{\frac{k+1%
}{k}}} \\
&\leq &\mathcal{D}_{1}\left( e^{-\frac{\mathcal{B}}{4}}\right) ^{n}
\end{eqnarray*}
\end{proposition}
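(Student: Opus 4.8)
The estimate is a statement about the asymptotic decay of the product of three factors: the exponential $e^{-\mathcal{B}n}$, the polynomial-type difference $\left(n+1\right)^{\frac{k+1}{k}}-n^{\frac{k+1}{k}}+1$, and the power term $\left(1+\frac{\mathcal{B}}{2}n^{-\frac{1}{k}}\right)^{n^{\frac{k+1}{k}}}$. The strategy is to show that, after absorbing the power term into the exponential, the remaining exponential decay $e^{-\frac{\mathcal{B}}{2}n}$ (roughly) dominates the polynomial growth, leaving a margin that yields the desired bound $\mathcal{D}_{1}\left(e^{-\frac{\mathcal{B}}{4}}\right)^{n}$. The key is to control the power term: taking logarithms, $n^{\frac{k+1}{k}}\ln\left(1+\frac{\mathcal{B}}{2}n^{-\frac{1}{k}}\right)\leq n^{\frac{k+1}{k}}\cdot\frac{\mathcal{B}}{2}n^{-\frac{1}{k}}=\frac{\mathcal{B}}{2}n$, using the elementary inequality $\ln(1+x)\leq x$ for $x\geq 0$. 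Hence the power term is at most $e^{\frac{\mathcal{B}}{2}n}$, and the product of the first and third factors is at most $e^{-\frac{\mathcal{B}}{2}n}$.

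Next I would handle the middle factor. By the mean value theorem applied to $t\mapsto t^{\frac{k+1}{k}}$, one has $\left(n+1\right)^{\frac{k+1}{k}}-n^{\frac{k+1}{k}}=\frac{k+1}{k}\xi^{\frac{1}{k}}$ for some $\xi\in(n,n+1)$, so this difference is at most $\frac{k+1}{k}(n+1)^{\frac{1}{k}}$, and the whole middle factor is bounded by a polynomial in $n$ of degree $\frac{1}{k}$ (plus a constant), say $\leq \mathcal{C}_{k}(n+1)^{\frac{1}{k}}$ for a suitable constant $\mathcal{C}_{k}$ depending only on $k$. Combining the three estimates, the left-hand side is at most $\mathcal{C}_{k}(n+1)^{\frac{1}{k}}e^{-\frac{\mathcal{B}}{2}n}$. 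Writing $e^{-\frac{\mathcal{B}}{2}n}=e^{-\frac{\mathcal{B}}{4}n}\cdot e^{-\frac{\mathcal{B}}{4}n}$, it then suffices to observe that the function $n\mapsto \mathcal{C}_{k}(n+1)^{\frac{1}{k}}e^{-\frac{\mathcal{B}}{4}n}$ is bounded on $\mathbb{N}$ by some constant $\mathcal{D}_{1}$ (indeed it tends to $0$ as $n\to\infty$ and is continuous, hence attains a maximum), which gives exactly the claimed inequality with that $\mathcal{D}_{1}$.

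The only mildly delicate point is the uniform boundedness of $\mathcal{C}_{k}(n+1)^{\frac{1}{k}}e^{-\frac{\mathcal{B}}{4}n}$: this follows from the standard fact that for any $\lambda>0$ and any exponent $\mu>0$, $\sup_{t\geq 0}t^{\mu}e^{-\lambda t}<+\infty$ (the supremum is $\left(\frac{\mu}{e\lambda}\right)^{\mu}$, attained at $t=\mu/\lambda$), which is the same elementary computation already invoked in the paper via the relation (\ref{boundsup}). One may therefore take, explicitly, $\mathcal{D}_{1}:=\mathcal{C}_{k}\sup_{t\geq 0}(t+1)^{\frac{1}{k}}e^{-\frac{\mathcal{B}}{4}t}$, a finite constant depending only on $k$ and $\mathcal{B}$. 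I do not anticipate a genuine obstacle here; the proof is a short chain of the inequality $\ln(1+x)\leq x$, the mean value theorem, and the boundedness of $t^{\mu}e^{-\lambda t}$, and the main care needed is simply bookkeeping the constants so that the final one depends only on $\mathcal{B}$ (and the globally fixed $k$), as asserted.
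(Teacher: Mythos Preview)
Your proposal is correct and follows essentially the same approach as the paper's proof: bound the power term by $e^{\frac{\mathcal{B}}{2}n}$ via $\ln(1+x)\leq x$, control the middle difference by a quantity of order $n^{1/k}$ (the paper writes $\frac{k+1}{k}(n^{1/k}+1)$, you use the mean value theorem to the same effect), then split $e^{-\frac{\mathcal{B}}{2}n}=e^{-\frac{\mathcal{B}}{4}n}\cdot e^{-\frac{\mathcal{B}}{4}n}$ and absorb the polynomial factor into one half. The arguments are the same up to cosmetic differences in bookkeeping.
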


\begin{proof}
The following inequalities hold for each $n\in 
\mathbb{N}
:$%
\begin{eqnarray*}
&&e^{-\mathcal{B}n}\left( \left( n+1\right) ^{\frac{k+1}{k}}-n^{\frac{k+1}{k}%
}+1\right) \left( 1+\frac{\mathcal{B}}{2}n^{-\frac{1}{k}}\right) ^{n^{\frac{%
k+1}{k}}} \\
&\leq &e^{-\mathcal{B}n}e^{\frac{\mathcal{B}}{2}n}\left( \left( n+1\right) ^{%
\frac{k+1}{k}}-n^{\frac{k+1}{k}}+1\right) \\
&\leq &\left( \frac{k+1}{k}\right) e^{-\frac{\mathcal{B}}{2}n}\left( n^{%
\frac{1}{k}}+1\right) \\
&\leq &\left( \frac{k+1}{k}\right) e^{-\frac{\mathcal{B}}{4}n}\left( n^{%
\frac{1}{k}}+1\right) e^{-\frac{\mathcal{B}}{4}n}
\end{eqnarray*}%
Consequently there exist a constant $\mathcal{D}_{1}>0$ such that :%
\begin{eqnarray*}
\left( \forall n\in 
\mathbb{N}
\right) &:&e^{-\mathcal{B}n}\left( \left( n+1\right) ^{\frac{k+1}{k}}-n^{%
\frac{k+1}{k}}+1\right) \cdot \\
&&\cdot \left( 1+\frac{\mathcal{B}}{2}n^{-\frac{1}{k}}\right) ^{n^{\frac{k+1%
}{k}}} \\
&\leq &\mathcal{D}_{1}\left( e^{-\frac{\mathcal{B}}{4}}\right) ^{n}
\end{eqnarray*}

Thence we achieve the proof of the proposition.
\end{proof}

\begin{proposition}
For each constants $R,\mathcal{B}>0,$ there exists a constant $\mathcal{D}%
_{2}>0$ such that the following estimate holds $:$%
\begin{eqnarray*}
\left( \forall l\in 
\mathbb{N}
\right) &:&\underset{\left( z,\zeta \right) \in D\times D_{1+\frac{2R}{3}},%
\text{ }\zeta \notin \overline{D}\cup \left \{ z\right \} }{\sup }\left( 
\frac{\exp \left( -\mathcal{B}\left( \left \vert \zeta \right \vert
-1\right) ^{-k}\right) l!}{\left \vert z-\zeta \right \vert ^{l+1}}\right) \\
&\leq &\mathcal{D}_{2}^{l+1}l^{\left( 1+\frac{1}{k}\right) l}
\end{eqnarray*}
\end{proposition}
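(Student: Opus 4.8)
The plan is to decouple the two variables and reduce the estimate to an elementary one-variable maximization. For any $(z,\zeta)$ in the set over which the supremum is taken one has $|z|<1<|\zeta|$, so $\zeta\neq z$ and
\[
|z-\zeta|\;\geq\;|\zeta|-|z|\;>\;|\zeta|-1\;=:\;t\;>\;0 .
\]
Writing the exponential in terms of $t$, it follows that
\[
\frac{\exp\!\left(-\mathcal{B}\left(|\zeta|-1\right)^{-k}\right)l!}{|z-\zeta|^{l+1}}
\;\leq\;\frac{l!\,\exp\!\left(-\mathcal{B}\,t^{-k}\right)}{t^{l+1}}
\;\leq\;l!\;\underset{s>0}{\sup}\;\frac{\exp\!\left(-\mathcal{B}\,s^{-k}\right)}{s^{l+1}} ,
\]
and the right-hand side no longer depends on $z$, $\zeta$ or $R$; hence it suffices to estimate this last supremum.

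The second step is the calculus computation of $\underset{s>0}{\sup}\,s^{-(l+1)}\exp(-\mathcal{B}s^{-k})$. The substitution $u:=s^{-k}$ turns it into $\underset{u>0}{\sup}\,u^{(l+1)/k}e^{-\mathcal{B}u}$; differentiating, the maximum is attained at $u=\frac{l+1}{k\mathcal{B}}$ and equals $\left(\frac{l+1}{ek\mathcal{B}}\right)^{(l+1)/k}$. Consequently the left-hand side of the proposition is bounded above by $l!\left(\frac{l+1}{ek\mathcal{B}}\right)^{(l+1)/k}$.

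The third step is to recast this bound in the announced form. For $l\geq 1$, using $l!\leq l^{l}$ and $l+1\leq 2l$ gives
\[
l!\left(\frac{l+1}{ek\mathcal{B}}\right)^{(l+1)/k}
\;\leq\;l^{l}\,l^{(l+1)/k}\left(\frac{2}{ek\mathcal{B}}\right)^{(l+1)/k}
\;=\;l^{\left(1+\frac{1}{k}\right)l}\cdot l^{1/k}\left(\frac{2}{ek\mathcal{B}}\right)^{(l+1)/k} .
\]
The point to verify is that the parasitic factor $l^{1/k}$ does not destroy the geometric growth: since $l^{1/l}\leq e^{1/e}$ for every $l\geq 1$, one has $l^{1/k}=\left(l^{1/(kl)}\right)^{l}\leq\left(e^{1/(ek)}\right)^{l}$, whence $l^{1/k}\left(\frac{2}{ek\mathcal{B}}\right)^{(l+1)/k}\leq\mathcal{D}_{2}^{\,l+1}$ for a suitable constant $\mathcal{D}_{2}=\mathcal{D}_{2}(R,\mathcal{B},k)$. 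Enlarging $\mathcal{D}_{2}$, if necessary, so that $\mathcal{D}_{2}\geq\left(ek\mathcal{B}\right)^{-1/k}$ also covers the case $l=0$ (recall the convention $0^{0}=1$), and taking the supremum over $(z,\zeta)$ in the displayed chain of inequalities yields the claimed estimate.

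I do not anticipate a genuine obstacle: the argument is a distance estimate followed by a one-variable optimization. The only slightly delicate point is the bookkeeping in the last step, namely absorbing the spurious factor $l^{1/k}$ into $\mathcal{D}_{2}^{\,l+1}$ by means of the boundedness of $l\mapsto l^{1/l}$.
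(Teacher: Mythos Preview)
Your proof is correct and follows essentially the same route as the paper: both arguments use the reverse triangle inequality $|z-\zeta|\geq|\zeta|-1$ to reduce the two-variable supremum to the one-variable maximization of $s\mapsto s^{-(l+1)}e^{-\mathcal{B}s^{-k}}$, then evaluate that maximum and absorb the resulting constants into $\mathcal{D}_{2}^{\,l+1}$. The only cosmetic difference is that the paper applies the inequality to the exponent in the numerator (replacing $|\zeta|-1$ by $|z-\zeta|$ there) whereas you apply it to the denominator, but the resulting one-variable problem and the final bookkeeping are the same.
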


\begin{proof}
For each constants $R,\mathcal{B}>0,$ we have for every $l\in 
\mathbb{N}
:$%
\begin{eqnarray*}
&&\underset{\left( z,\zeta \right) \in D\times D_{1+\frac{2R}{3}},\text{ }%
\zeta \notin \overline{D}\cup \left \{ z\right \} }{\sup }\left( \frac{\exp
\left( -\mathcal{B}\left( \left \vert \zeta \right \vert -1\right)
^{-k}\right) l!}{\left \vert z-\zeta \right \vert ^{l+1}}\right) \\
&\leq &l!\underset{\left( z,\zeta \right) \in D\times D_{1+\frac{2R}{3}},%
\text{ }\zeta \notin \overline{D}\cup \left \{ z\right \} }{\sup }\left( 
\frac{\exp \left( -\mathcal{B}\left \vert z-\zeta \right \vert ^{-k}\right) 
}{\left \vert z-\zeta \right \vert ^{l+1}}\right) \\
&\leq &\mathcal{B}^{-\frac{l+1}{k}}l!\underset{t\geq 0}{\sup }\left( t^{%
\frac{l+1}{k}}e^{-t}\right)
\end{eqnarray*}%
But, according to the relation (\ref{boundsup}), we can write : 
\begin{eqnarray*}
&&\mathcal{B}^{-\frac{\left( l+1\right) }{k}}\underset{t\geq 0}{\sup }\left(
t^{\frac{l+1}{k}}e^{-t}\right) \\
&\leq &\left( \frac{\mathcal{B}}{k}\right) ^{\frac{1}{k}}\left( \left( \frac{%
k}{\mathcal{B}}\right) ^{\frac{1}{k}}\right) ^{l}l^{\frac{l}{k}}
\end{eqnarray*}%
Let us set $\mathcal{D}_{2}:=\max \left( \left( \frac{\mathcal{B}}{k}\right)
^{\frac{1}{k}},1\right) $. Then we have the following relation : 
\begin{eqnarray*}
\left( \forall l\in 
\mathbb{N}
\right) &:&\underset{\left( z,\zeta \right) \in D\times D_{1+\frac{2R}{3}},%
\text{ }\zeta \notin \overline{D}\cup \left \{ z\right \} }{\sup }\left( 
\frac{\exp \left( -\mathcal{B}\left( \left \vert \zeta \right \vert
-1\right) ^{-k}\right) l!}{\left \vert z-\zeta \right \vert ^{l+1}}\right) \\
&\leq &\mathcal{D}_{2}^{l+1}l^{\left( 1+\frac{1}{k}\right) l}
\end{eqnarray*}%
Thence we achieve the proof of the proposition.
\end{proof}

\begin{proposition}
For each constant $\mathcal{B}>0,$ the following asymptotic estimate holds $%
: $%
\begin{eqnarray*}
&&e^{-\mathcal{B}n}\underset{n^{\frac{k+1}{k}}\leq j<\left( n+1\right) ^{%
\frac{k+1}{k}}}{\sum }\left( 1+\exp \left( \mathcal{B}\left( j^{\frac{k}{k+1}%
}-\left( j-1\right) ^{\frac{k}{k+1}}\right) \right) \right) \cdot \\
&&\cdot \left( 1+\frac{\mathcal{B}}{3}n^{-\frac{1}{k}}\right) ^{j+N} \\
&=&\underset{n\rightarrow +\infty }{O}\left( \left( e^{-\frac{\mathcal{B}}{4}%
}\right) ^{n}\right)
\end{eqnarray*}
\end{proposition}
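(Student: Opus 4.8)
The plan is to reduce the left-hand side to elementary growth estimates after splitting a generic summand into three harmless pieces. Put $\beta:=\frac{k}{k+1}\in(0,1)$, so that $\frac{k+1}{k}=\frac{1}{\beta}=1+\frac{1}{k}$, and note that the summation index runs exactly over the integers $j$ with $n\leq j^{\beta}<n+1$, i.e. $n^{1/\beta}\leq j<(n+1)^{1/\beta}$. Since the asserted $O$ is as $n\rightarrow+\infty$ it suffices to treat $n\geq 2$, in which case every admissible $j$ satisfies $j\geq n^{1/\beta}\geq 2$.

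First I would count the admissible indices: by the mean value theorem applied to $t\mapsto t^{1/\beta}$ on $[n,n+1]$, there are at most
\[
(n+1)^{1/\beta}-n^{1/\beta}+1\leq \tfrac{1}{\beta}(n+1)^{\frac{1}{\beta}-1}+1=\tfrac{k+1}{k}(n+1)^{1/k}+1
\]
of them, i.e. $O\bigl(n^{1/k}\bigr)$ indices. Next, for $j\geq 2$ the mean value theorem applied to $t\mapsto t^{\beta}$ gives $j^{\beta}-(j-1)^{\beta}\leq \beta (j-1)^{\beta-1}\leq\beta<1$, so the first factor obeys $1+\exp\bigl(\mathcal{B}(j^{\beta}-(j-1)^{\beta})\bigr)\leq 1+e^{\mathcal{B}}$, a constant independent of $j$ and $n$. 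For the last factor I would use $\ln(1+x)\leq x$ together with $j+N<(n+1)^{1+\frac{1}{k}}+N$ to write
\[
\Bigl(1+\tfrac{\mathcal{B}}{3}n^{-\frac{1}{k}}\Bigr)^{j+N}\leq \exp\Bigl(\tfrac{\mathcal{B}}{3}n^{-\frac{1}{k}}\bigl((n+1)^{1+\frac{1}{k}}+N\bigr)\Bigr),
\]
and then estimate $n^{-\frac{1}{k}}(n+1)^{1+\frac{1}{k}}=(n+1)\bigl(1+\tfrac{1}{n}\bigr)^{1/k}$ by means of the expansion $\bigl(1+\tfrac{1}{n}\bigr)^{1/k}=1+O\bigl(\tfrac{1}{n}\bigr)$ to obtain $n^{-\frac{1}{k}}(n+1)^{1+\frac{1}{k}}\leq n+c$ for some constant $c=c(k)$ and all large $n$; consequently the last factor is bounded by a constant times $e^{\mathcal{B}n/3}$.

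Collecting the three bounds, the left-hand side is at most $e^{-\mathcal{B}n}\cdot O(n^{1/k})\cdot(1+e^{\mathcal{B}})\cdot(\text{const})\cdot e^{\mathcal{B}n/3}=O\bigl(n^{1/k}e^{-\frac{2\mathcal{B}}{3}n}\bigr)$, and since $n^{1/k}e^{-\frac{5\mathcal{B}}{12}n}$ is bounded (for instance by $(\ref{boundsup})$ with $l=1$) we conclude
\[
n^{1/k}e^{-\frac{2\mathcal{B}}{3}n}=\bigl(n^{1/k}e^{-\frac{5\mathcal{B}}{12}n}\bigr)\,e^{-\frac{\mathcal{B}}{4}n}=O\Bigl(\bigl(e^{-\frac{\mathcal{B}}{4}}\bigr)^{n}\Bigr),
\]
which is the assertion. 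The only step requiring a little care is the inequality $n^{-\frac{1}{k}}(n+1)^{1+\frac{1}{k}}\leq n+O(1)$: when $k<1$ the exponent $1/k$ exceeds $1$, so the elementary Bernoulli bound $(1+x)^{\alpha}\leq 1+\alpha x$ is not available and one must instead invoke the asymptotic expansion of $\bigl(1+\tfrac{1}{n}\bigr)^{1/k}$; every other inequality above is routine.
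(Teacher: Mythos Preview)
Your argument is correct and follows essentially the same route as the paper's proof: bound the factor $1+\exp(\mathcal{B}(j^{\beta}-(j-1)^{\beta}))$ by a constant, control $(1+\frac{\mathcal{B}}{3}n^{-1/k})^{j+N}$ by an exponential of the form $e^{(1+o(1))\mathcal{B}n/3}$, count the $O(n^{1/k})$ admissible indices, and absorb the polynomial into the remaining exponential decay. Your version is in fact more explicit --- you justify each step via the mean value theorem and the elementary inequality $\ln(1+x)\leq x$, and you obtain the slightly sharper intermediate bound $O(n^{1/k}e^{-2\mathcal{B}n/3})$ rather than the paper's $O(n^{1/k}e^{-\mathcal{B}n/3})$ --- but the structure and the key ideas coincide.
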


\begin{proof}
The following asymptotic relations hold for each constant $\mathcal{B}>0:$ 
\begin{eqnarray*}
&&e^{-\mathcal{B}n}\underset{n^{\frac{k+1}{k}}\leq j<\left( n+1\right) ^{%
\frac{k+1}{k}}}{\sum }\left( 1+\exp \left( \mathcal{B}\left( j^{\frac{k}{k+1}%
}-\left( j-1\right) ^{\frac{k}{k+1}}\right) \right) \right) \left( 1+\frac{%
\mathcal{B}}{3}n^{-\frac{1}{k}}\right) ^{j+N} \\
&=&\underset{n\rightarrow +\infty }{O}\left( e^{-\mathcal{B}n}\exp \left(
\left( 1+\underset{n\rightarrow +\infty }{o}\left( 1\right) \right) \frac{%
\mathcal{B}}{3}n\right) \left( \underset{n^{\frac{k+1}{k}}\leq j<\left(
n+1\right) ^{\frac{k+1}{k}}}{\sum }1\right) \right) \\
&=&\underset{n\rightarrow +\infty }{O}\left( e^{\frac{2\mathcal{B}}{3}n}e^{-%
\mathcal{B}n}\left( \left( n+1\right) ^{\frac{k+1}{k}}-n^{\frac{k+1}{k}%
}\right) \right) \\
&=&\underset{n\rightarrow +\infty }{O}\left( n^{\frac{1}{k}}e^{-\frac{%
\mathcal{B}}{3}n}\right) \\
&=&\underset{n\rightarrow +\infty }{O}\left( \left( e^{-\frac{\mathcal{B}}{4}%
}\right) ^{n}\right)
\end{eqnarray*}%
Thence we achieve the proof of the proposition.
\end{proof}

\bigskip

\textbf{Acnowlegments :}\emph{\ We would like to express our deep gratitude
to professor Said Asserda and to professor Karim Kellay for their precious
bibliographic help who allowed us to discover the wonderful world of
polyanalytic functions. Our thanks to the referee for her/his remarks and
suggestions who have improved this paper.\bigskip }

\end{document}